\documentclass[10pt]{amsart}
\usepackage{amsmath,amsthm,amssymb,amsfonts,eucal,amscd,mathbbol,mathrsfs,tikz-cd}
\usepackage[shortlabels]{enumitem}
\usepackage{relsize}
\usepackage{cite}
\usepackage{setspace}
\usepackage[all,cmtip]{xy}
\usepackage{graphicx}
\usepackage{subfig}
\usepackage{leftidx}
\usepackage[bookmarks,bookmarksnumbered, plainpages=false, pdfpagelabels]{hyperref}
\usepackage{xcolor}
\usepackage{extarrows}
\usepackage{rotating}
\usepackage{scalefnt}
\usepackage{enumitem}

\hypersetup{
 colorlinks   = true, 
 urlcolor     = green, 
 linkcolor    = blue, 
 citecolor   = red 
}

\newtheorem{thm}{Theorem}

\newtheorem{lem}{Lemma}
\newtheorem{prop}{Proposition}

\theoremstyle{definition}
\newtheorem{defn}{Definition}

\newtheorem{remark}{Remark}

\newcommand{\R}{\ensuremath{\mathbb{R}}}

\def\i{\infty}

\def\p{\partial}
\def\cal{\mathcal}

\def\Ad{\mathrm{Ad}}
\def\A{\mathrm{A}}
\def\sgn{\mathrm{sgn}}
\def\sh{\mathrm{Sh}}

\def\a{\alpha}
\def\b{\beta} 
\def\g{\gamma}

\def\e{\epsilon}

\def\s{\sigma} 

\def\o{\omega}

\def\G{\Gamma}
\def\D{\Delta}

\def\H{\cal{H}}

\def\L{\cal{L}}

\def\U{\cal{U}}

\def\fg{\mathfrak{g}}
\def\fh{\mathfrak{h}}

\renewcommand{\theenumi}{(\alph{enumi})}
\makeatletter
	\renewcommand{\p@enumii}{\theenumi}
\makeatother                                            

\begin{document} 
	
\author[H. Pugh]{H. Pugh} 
\title{The Hopf Algebraic Structure of Finitely Supported Currents on a Lie Group}

\maketitle
\begin{abstract}
The space of de Rham currents supported in finitely many points in a Lie group \( G \) has the structure of a filtered differential graded Hopf algebra. The product is given by convolution of compactly supported currents, and the co-product dualizes to wedge product on differential forms. This space arises as the finitely supported sections functor \( \G^{finite} \) applied to the bundle \( \U(G) \) of currents on \( G \) supported at a single (variable) point, and the differential Hopf algebra operations pull back via \( \G^{finite} \) to bundle maps. Explicit formulas for these bundle maps are obtained, and we show in particular that the convolution product takes the form of a Hopf-algebraic smash product.
\end{abstract}

\section{Introduction}
Let \( G \) be a Lie group with Lie algebra \( \fg \) and let \( \U(G)=\oplus_k \U_k(G) \) be the vector bundle of de Rham currents on \( G \) supported in a single (variable) point (see \cite{coprodman},) graded by dimension \( k \) and filtered by order\footnote{A compactly supported current \( T \) has order \( r \) if it extends to a continuous linear functional on \( C^r \) forms, and does not extend further to \( C^s \) forms for \( s<r \). Every compactly supported current has finite order.} \( r \), \( \U_k(G)=\cup_r \U_k^r(G) \). The fiber \( \U(G;p) \) of \( \U(G) \) at \( p \) consists of currents whose support is contained in \( \{p\} \), and has the structure of a filtered differential graded co-algebra: The co-algebra \( (\D_p, \e_p) \) dualizes to the exterior algebra of differential forms, and the boundary \( \p_p \) is dual to exterior derivative. 

The boundary \( (\p_p)_{p\in G} \), co-product \( (\D_p)_{p\in G} \) and co-unit \( (\e_p)_{p\in G} \) maps stitch together to form smooth bundle maps \( \tilde{\p}: \U_k^r(G) \to \U_{k-1}^{r+1}(G) \), \( \tilde{\D}: \U_k^r(G)\to \oplus_{i+j=k}\sum_{s+t=r}\U_i^s(G)\otimes \U_j^t(G) \), and \( \tilde{\e}: \U(G)\to G\times \R \), respectively, covering the identity on \( G \). 

The convolution of two compactly supported currents in a Lie group is well-defined \cite{guillemot}, \cite{schwartz0} and restricts to a map
\[
\ast_{p,q}: \U_i^s(G;p) \otimes \U_j^t(G;q) \to \U_{i+j}^{s+t}(G;pq).
\]

These convolution product maps \( (\ast_{p,q})_{(p,q)\in G\times G} \) stitch together to form a smooth bundle map \( \tilde{\ast} \) whose domain is the bundle \( \pi_1^* \U_i^s(G) \otimes \pi_2^* \U_j^t(G) \) over \( G\times G \) (the maps \( \pi_i:G\times G\to G \), \( i=1,2 \) being the projections into the first and second factor, respectively,) with fiber \( \U_i^s(G;p)\otimes \U_j^t(G;q) \) at \( (p,q) \), and whose range is \( \U_{i+j}^{s+t}(G) \). The bundle map \( \tilde{\ast} \) covers the group law \( G\times G\to G \).

Applying the finitely supported sections functor \( \G^{finite} \) to the maps \( \tilde{\p} \), \( \tilde{\D} \), \( \tilde{\e} \), and \( \tilde{\ast} \), we get, respectively, a differential \( \p \), co-product \( \D \), co-unit \( \e \), and product \( \ast \) on the space \( \G^{finite}(\U(G)) \). Moreover, identifying \( \G^{finite}(\U(G)) \) with the space of finitely supported currents on \( G \),
\begin{thm}
	\label{thm:hopf1}
	The space of finitely supported currents on \( G \) has the structure of a filtered differential graded Hopf algebra. The Hopf algebra maps pull back via \( \G^{finite} \) to bundle maps. In particular, the antipode pulls back to a bundle map on \( \U(G) \) covering the inversion map \( g\mapsto g^{-1} \) on \( G \).
\end{thm}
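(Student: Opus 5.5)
I will first reduce every Hopf algebra axiom to a statement about fibers, and then verify those fiber statements via the identification of \( \U(G;p) \) with pushforwards of currents at the identity. Concretely, left translation \( L_p \) gives an isomorphism of filtered differential graded co-algebras \( \U(G;e)\to\U(G;p) \). This holds because pushforward commutes with boundary and, being dual to pullback of forms (an algebra map), also commutes with \( \D \) and \( \e \). The standard description of currents supported at a point then identifies \( \U(G;e) \) with \( S(\fg)\otimes\Lambda(\fg) \), given by derivatives of the Dirac current along left invariant vector fields wedged with \( k \)-vectors. This also shows that the maps \( \tilde\p,\tilde\D,\tilde\e \) are smooth bundle maps, since in the left trivialization \( \U(G)\cong G\times\U(G;e) \) they become fiberwise constant or polynomial in \( p \).

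Second, I will establish the bialgebra structure on \( \G^{finite}(\U(G)) \). Associativity of \( \ast \) and the unit \( \delta_e \) follow from associativity of convolution of compactly supported currents \cite{guillemot}, \cite{schwartz0}. Convolution is the pushforward \( m_*(S\times T) \) of the Cartesian product under the group law \( m \). The compatibility \( \D(S\ast T)=\D S\ast\D T \), with the Koszul sign from graded tensor products, then comes from two facts. First, pushforward is a co-algebra map, being dual to the algebra map \( m^* \) on forms. Second, the Cartesian product of currents is dual to the identity \( (\pi_1^*\a\wedge\pi_2^*\b) \) splitting of forms on \( G\times G \), so it intertwines coproducts. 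The Leibniz rule \( \p(S\ast T)=\p S\ast T+(-1)^{|S|}S\ast\p T \) follows in the same way from \( \p(S\times T) \) and \( m_*\p=\p m_* \). The co-unit is multiplicative because \( \e \) only sees the degree \( 0 \) order \( 0 \) part, namely the Dirac masses, and \( \delta_p\ast\delta_q=\delta_{pq} \). Grading and filtration are additive by the displayed map \( \ast_{p,q} \). On the fibers, these are the local statements, and the finitely supported sections functor turns them into global ones by linearity.

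Third, and this is the main obstacle, I must construct the antipode and show that it is a bundle map covering inversion. My first attempt will be \( \mathcal S(T)=\iota_*T \) with \( \iota(g)=g^{-1} \), possibly corrected by a sign \( (-1)^{\binom{k}{2}} \) or \( (-1)^k \) to account for graded commutativity. This candidate is automatically a smooth bundle map \( \U(G;p)\to\U(G;p^{-1}) \) covering inversion. To verify the antipode identity \( \ast\circ(\mathcal S\otimes 1)\circ\D=\eta\circ\e \), I will use the identity \( m\circ(\iota\times 1)\circ\mathrm{diag}=e \). Dually, the coproduct \( \D \) on point-supported currents corresponds to the diagonal \( \mathrm{diag}_* \). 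The point to check carefully is whether \( \D T \) really equals \( \mathrm{diag}_*T \) under the identification of \( \U(G\times G) \) with \( \U(G)\otimes\U(G) \). For positive-dimensional parts it is only dual to wedge product, and the composite \( m_*(\iota\times1)_*\mathrm{diag}_* \) pushes everything to the constant map \( e \), which kills all currents of dimension \( >0 \) or of positive order. This matches \( \eta\e \).

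If the sign bookkeeping fails, I will fall back on an alternative. Since the bialgebra is connected-filtered on each fiber after translating to \( e \) (via \( S(\fg)\otimes\Lambda(\fg) \)), and the grouplike part is the group algebra \( \R[G] \), the antipode exists by the usual recursive (Takeuchi) formula. I would then verify fiberwise, using the left trivialization, that it is smooth and supported over \( p^{-1} \).
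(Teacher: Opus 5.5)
Your route is correct, and it is genuinely different from the paper's. The paper never argues intrinsically. It identifies \( \U(G) \) with \( G\times(U(\fg)\otimes\wedge(\fg^-)) \) and derives the explicit convolution formula \eqref{eq:convolution} in Lemmas~\ref{lem:conv0}--\ref{lem:conv} by trading left- for right-invariant operators. It simply posits \eqref{eq:antipode}, and then checks associativity, the bialgebra identity, the antipode identities and the behaviour of \( \p \) by Sweedler-notation computations in the abstract setting of Theorem~\ref{thm:hopf2}; Theorem~\ref{thm:hopf1} is the case \( H=G \), \( \A=\Ad \). You instead get every axiom from functoriality: \( \ast=m_*\circ\times \), pushforwards and \( \times \) intertwine \( \D,\e,\p \), and the antipode is \( \iota_* \). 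Your route gives sign-safe arguments with almost no computation, and a conceptual reason why the antipode covers inversion and preserves grading and filtration. The paper's route gives explicit formulas and the extension to arbitrary \( (H,\A) \) and to the tensor-algebra lift (Theorems~\ref{thm:hopf2}--\ref{thm:hopf4}), where no geometric model is available.

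You should drop the hedging on the antipode, because the check you flag takes one line and produces no sign. For forms \( \o,\zeta \) on \( G \),
\[ (\mathrm{diag}_*T)(\pi_1^*\o\wedge\pi_2^*\zeta)=T(\o\wedge\zeta)=\sum T_{(1)}(\o)T_{(2)}(\zeta)=\sum (T_{(1)}\times T_{(2)})(\pi_1^*\o\wedge\pi_2^*\zeta), \]
with \( \times \) the product of Definition~\ref{def:conv}. Jets of such forms span, so \( \sum T_{(1)}\times T_{(2)}=\mathrm{diag}_*T \) exactly. Hence \( \sum\iota_*T_{(1)}\ast T_{(2)}=(m\circ(\iota\times 1)\circ\mathrm{diag})_*T=c_*T \), with \( c \) the constant map to \( e \), and \( c_*T=T(1)\delta_e=\e(T)\delta_e \). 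Your phrase about killing currents of positive order is inaccurate (consider \( \delta_p+X\delta_p \)), but this formula is all you need. The other identity follows the same way from \( m\circ(1\times\iota)\circ\mathrm{diag}=e \). So \( \iota_* \) itself is the antipode, and the Takeuchi fallback is unnecessary. By uniqueness of antipodes it equals \eqref{eq:antipode}: on \( (p,1\otimes\a) \) with \( \a\in\wedge^k(\fg^-) \) you get \( (p^{-1},1\otimes(-1)^k\Ad_p^{-1}\a) \), which is \eqref{eq:antipode} with \( S=(-1)^k \), the graded antipode of \( \wedge(\fg^-) \).

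Two small omissions. First, you never check that \( \tilde\ast \) is smooth. In your left trivialization this follows from \( (L_{p*}A)\ast(L_{q*}B)=L_{pq*}((C_{q^{-1}})_*A\ast B) \), where \( C \) is conjugation; similarly \( \iota_*L_{p*}=L_{p^{-1}*}(C_p)_*\iota_* \). Second, the counit argument is cleaner as \( \e(A\ast B)=(A\times B)(m^*1)=A(1)B(1) \).

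Finally, since pushforward commutes with boundary, your antipode satisfies \( \p S=S\p \) with no sign. This is also forced abstractly: apply \( \p \) to \( S\ast\mathrm{id}=\eta\e \) and use parts (b) and (c) of Proposition~\ref{prop:bdry} to get \( (\p S-S\p)\ast\mathrm{id}=0 \). So the alternating sign relating \( S \) and \( \p \) asserted after Theorem~\ref{thm:hopf1} (and in Proposition~\ref{prop:bdry}(d)) is not what the actual antipode satisfies. Your sign-free relation is the consistent one.
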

In particular, \( \p \) commutes with both the product and co-product structures, the antipode \( S \) satisfies \( S\p = (-1)^{k-1} \p S \) on \( \G^{finite}(\U_k(G)) \) and \( S(\G^{finite}(\U_k^r(G)))\subseteq \G^{finite}(\U_k^r(G)) \).

We now provide explicit formulas for these Hopf algebra maps. The bundle \( \U_k(G) \) is naturally isomorphic to the (trivial) bundle \[ G\times \left(U(\fg)\otimes \wedge^k(\fg^-)\right), \] where \( U(\fg) \) is the universal enveloping algebra of the Lie algebra \( \fg \) of \( G \) and \( \fg^- \) is the Lie algebra \( \fg \) with negative Lie bracket. Here we are interpreting the elements of \( U(\fg) \) as left-invariant differential operators \( C^\i(G)\to C^\i(G) \), and \( \wedge^k(\fg^-) \) as right-invariant tangent \( k \)-vector fields, so \( (p, v\otimes \a)\in  G\times U(\fg)\otimes \wedge^k(\fg^-) \) is identified with the current \( \o \mapsto [v(\o)]_p(\a), \) where \( v \) is extended to \( k \)-forms by the Leibniz rule and commutation with \( d \).

Transporting the bundle maps \( \tilde{\p} \), \( \tilde{\D} \), \( \tilde{\e} \) and \( \tilde{\ast} \) to \( G\times (U(\fg)\otimes \wedge(\fg^-)) \), we have the following formulas:

For \( v\in U(\fg) \) and \( \a=\a_0\wedge\cdots\wedge \a_k\in \wedge^{k+1}(\fg^-) \), the boundary \( \tilde{\p}(p, v\otimes \a) \) is equal to
\begin{equation}
	\label{eq:boundary}
	 \left(p,\sum_{i=0}^k(-1)^i (\Ad_p^{-1}(\a_i)) v \otimes \a_0\wedge\cdots \hat{\a_i}\cdots\wedge \a_k + \sum_{i<j}(-1)^{i+j+1}v\otimes [\a_i,\a_j]\wedge\a_1\wedge\cdots \hat{\a_i}\cdots\hat{\a_j}\cdots\wedge\a_k\right).
\end{equation}
The co-product bundle map \( \tilde{\D} \) is given, using Sweedler notation for the co-products in the Hopf algebras \( U(\fg) \) and \( \wedge(\fg^-) \), by
\begin{equation}
	\label{eq:coproduct}
	\tilde{\D}(p, v\otimes \a)=(p, (v_{(1)}\otimes \a_{(1)}) \otimes (v_{(2)}\otimes \a_{(2)})),
\end{equation}
and the co-unit bundle map is given by
\begin{equation}
	\label{eq:counit}
	\tilde{\e}(p, v\otimes \a)=(p, \e(v)\e(\a)),
\end{equation}
where \( \e(v) \) and \( \e(\a) \) refer to the co-units of \( v \) and \( \a \) in \( U(\fg) \) and \( \wedge(\fg^-) \), respectively. In other words, \( (\tilde{\D}, \tilde{\e}) \) is constant across fibers, equal to the tensor product of the co-algebras \( U(\fg) \) and \( \wedge(\fg^-) \).


Equation \eqref{eq:boundary} follows from the invariant formula for exterior derivative, since the right-invariant vector field equal to \( \a_i \) at \( e \) and the left invariant vector field equal to \( (\Ad_p^{-1}(\a_i)) \) at \( e \) coincide at \( p \).  Equation \eqref{eq:coproduct} follows from the Leibniz rule for iterated Lie derivatives\footnote{For vector fields \( v_1,\dots, v_r \), \[ \L_{v_1}\cdots \L_{v_r} (\o \wedge \eta)= \sum_{s=0}^r \sum_{\s\in Sh(s,r-s)}\left(\L_{v_{\sigma(1)}}\cdots\L_{v_{\sigma(s)}} \o \right)\wedge \left(\L_{v_{\sigma(s+1)}}\cdots\L_{v_{\sigma(r)}} \eta\right), \] where \( Sh(s, r-s) \) is the set of \( (s,r-s) \)-shuffle permutations.} and the duality between the co-product and exterior product on \( \wedge(\fg^-) \) and \( \wedge(\fg^-)^* \), respectfully. 
The Leibniz rule for exterior derivative then implies the commutation of \( \tilde{\p} \) and \( \tilde{\D} \) (this can also be checked from the formulas, as can the property that \( \tilde{\p}^2=0 \).)

The bundle map \( \tilde{\ast} \) is given, for \( (p, v\otimes \a)\in \U_k(G;p) \) and \( (q, w\otimes \b)\in \U_j(G;q) \) by 
\begin{equation}
	\label{eq:convolution}
	\tilde{\ast}((p,q), (v\otimes \a)\otimes(w\otimes \b)) = (pq, \Ad_q^{-1}(v_{(1)}) w\otimes \a\wedge \Ad_p[v_{(2)},\b]).
\end{equation}

The notation \( [\cdot,\cdot]: U(\fg)\times \wedge(\fg^-)\to \wedge(\fg^-) \) refers to the extension of the Lie bracket on \( \fg \) using the universal properties of \( U(\fg) \) and \( \wedge(\fg^-) \). The adjoint action \( \Ad \) is extended similarly to \( U(\fg) \) and \( \wedge(\fg^-) \). We derive \eqref{eq:convolution} in Lemma \ref{lem:conv}.

\begin{remark}
	The space \( \G^{finite}(\U(G)) \) is trivially isomorphic to the group ring \( (U(\fg)\otimes \wedge(\fg^-))[G] \), which is equipped with a convolution product \( \ast' \). Explicitly, \[ (p, v\otimes \a)\ast' (q, w\otimes \b) = (pq, (vw)\otimes (\a\wedge \b)). \] However, this convolution product does not agree with the convolution product \( \ast \) of compactly supported currents as given in \eqref{eq:convolution}, so we can think of \( \G^{finite}(\U(G)) \) as a quantization of \( (U(\fg)\otimes \wedge(\fg^-))[G] \).
\end{remark}

\begin{remark}
	The convolution product formula \eqref{eq:convolution} looks suspiciously like a Hopf-algebraic smash product (see \cite{sweedler}). Indeed, it (almost) is: The space \( \G^{finite}(\U(G)) \) is naturally isomorphic to \( A\otimes_{C^\i(G)}H \), where \( A=\G^{finite}(G\times \wedge(\fg^-)) \) and \( H=\G^{finite}(G\otimes U(\fg)) \). The space \( H \) is a Hopf algebra, being a Hopf sub-algebra of \( \G^{finite}(\U(G)) \), and \( H \) acts on \( A \) as follows: For \( (p,v)\in H \) and \( (q,\a)\in A \), \[ (p,v)\cdot (q,\a) := (pq, Ad_p[v,\a]). \] This turns \( A \) into a (left) Hopf \( H \)-module algebra, and we can define the (Hopf-algebraic) smash product on \( A\otimes_\R H \). This product is also well-defined on \( A\otimes_{C^\i(M)}H \), hence on \( \G^{finite}(\U(G)) \), and is none other than the convolution product \( \ast \).
\end{remark} 

Finally, letting \( S(v) \) and \( S(\a) \) be the antipodes of \( v\in U(\fg) \) and \( \a\in \wedge(\fg^-) \), respectively, the fiber-wise antipode map \( S_p: \U_k(G;p)\to \U_k(G;p^{-1}) \) given by
\begin{equation}
	\label{eq:antipode}
	S_p(p, v\otimes \a)=(p^{-1}, \Ad_p(S(v_{(1)}))\otimes [S(v_{(2)}),\Ad_p^{-1}S(\a)]),
\end{equation}
which defines a smooth bundle map \( \tilde{S}: \U(G)\to \U(G) \) covering the inversion \( g\mapsto g^{-1} \) map on \( G \). We will show that \( \tilde{S} \) satisfies the equations required of an antipode, and that it commutes (up to sign) with \( \tilde{\p} \).

Using the natural quotient \( \otimes(\fg)\to U(\fg) \), there is a natural quotient of bundles \( \Phi: \otimes(TG)\otimes \wedge(TG)\to \U(G) \). The bundle maps \( \tilde{\p}, \tilde{\ast}, \tilde{\D} \), \( \tilde{\e} \) and \( \tilde{S} \) lift via \( \Phi \) to smooth bundle maps on \( \otimes(TG)\otimes \wedge(TG)\simeq G\times \otimes(\fg)\otimes \wedge(\fg^-) \), and 

\begin{thm}
	\label{thm:hopf3}
	Applying \( \G^{finite} \) to these lifted maps, the space \( \G^{finite}(G\times \otimes(\fg)\otimes \wedge(\fg^-)) \) has the structure of a filtered and graded Hopf algebra. Moreover, the lift of \( \p \), while not a differential, respects the filtration and grading, and commutes with the co-algebra structure as a differential would.
\end{thm}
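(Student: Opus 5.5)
The plan is to transport everything to the trivial bundle \( G\times \otimes(\fg)\otimes\wedge(\fg^-) \) and check the Hopf axioms directly from the lifted formulas. The lifts are obtained from \eqref{eq:boundary}, \eqref{eq:coproduct}, \eqref{eq:counit}, \eqref{eq:convolution}, \eqref{eq:antipode} by reading \( v,w \) as elements of the tensor algebra \( \otimes(\fg) \). Here \( \otimes(\fg) \) carries its standard cocommutative Hopf structure: elements of \( \fg \) are primitive, the product is concatenation, and \( S(x_1\cdots x_r)=(-1)^r x_r\cdots x_1 \). Likewise \( \Ad \) and \( [\cdot,\cdot]:\otimes(\fg)\times\wedge(\fg^-)\to\wedge(\fg^-) \) are extended by the universal property of the tensor algebra; for the bracket this means the iterated action of \( \fg \) on \( \wedge(\fg^-) \) by derivations. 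With this reading, \( \Phi \) intertwines each lifted map with the original one, since the quotient \( \otimes(\fg)\to U(\fg) \) is a Hopf map compatible with \( \Ad \) and the bracket action. The proof then checks, fiberwise and hence on \( \G^{finite} \), the following identities:
\begin{enumerate}
\item associativity and unitality of the lifted \( \tilde\ast \), with unit \( (e,1\otimes 1) \);
\item coassociativity and counitality, which are immediate because \( (\tilde\D,\tilde\e) \) is the constant tensor product coalgebra;
\item multiplicativity of \( \tilde\D \) and \( \tilde\e \) with respect to \( \tilde\ast \);
\item the antipode identities;
\item compatibility of the lifted \( \p \) with \( \tilde\D \), the filtration and the grading.
\end{enumerate}

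For associativity I will use the smash product viewpoint of the second Remark. Set \( A=\G^{finite}(G\times\wedge(\fg^-)) \) and \( H'=\G^{finite}(G\times\otimes(\fg)) \). Then \( H' \) is a Hopf algebra: it is the group ring of \( G \) smashed with \( \otimes(\fg) \) via \( \Ad \), with product \( (p,v)(q,w)=(pq,\Ad_q^{-1}(v)w) \), and associativity follows from \( \Ad \) being an action by Hopf automorphisms. The action \( (p,v)\cdot(q,\a)=(pq,\Ad_p[v,\a]) \) makes \( A \) a left \( H' \)-module algebra. This rests on three facts: elements of \( \fg \) act on \( \wedge(\fg^-) \) by derivations, which gives the Leibniz property \( h\cdot(ab)=(h_{(1)}\cdot a)(h_{(2)}\cdot b) \); \( \Ad_p \) is an algebra automorphism intertwining the bracket, \( \Ad_p[v,\a]=[\Ad_pv,\Ad_p\a] \); and the module property \( [vw,\a]=[v,[w,\a]] \) holds in the tensor algebra by definition. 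Sweedler's general theorem that a smash product of a module algebra is an associative algebra then gives associativity of \( \tilde\ast \). I expect this to be the main obstacle. It requires careful bookkeeping of the \( \Ad_p \) and \( \Ad_q^{-1} \) twists in \eqref{eq:convolution}, and in particular checking that the formula equals \( (a\otimes h)(b\otimes k)=a(h_{(1)}\cdot b)\otimes h_{(2)}k \) under the identification, including the \( C^\i(G) \)-balancing.

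The compatibility of \( \tilde\D \) with \( \tilde\ast \) is the standard fact that a smash product of a cocommutative Hopf algebra with a module bialgebra is a bialgebra. This needs \( \D \) on \( \wedge(\fg^-) \) to be \( H' \)-equivariant, which holds since derivations induced from \( \fg \) and the maps \( \Ad_p \) commute with the shuffle coproduct. Cocommutativity of \( \otimes(\fg) \) handles the reordering of Sweedler indices. For the antipode I will verify \( \tilde\ast\circ(\tilde S\otimes 1)\circ\tilde\D=\eta\tilde\e \) directly from \eqref{eq:antipode}. The group component gives \( p^{-1}p=e \). The \( \otimes(\fg) \) component collapses via \( S(v_{(1)})v_{(2)}=\e(v) \) after the \( \Ad \) twists cancel. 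The exterior component collapses via \( [S(v_{(1)}),\cdot] \) undoing \( [v_{(2)},\cdot] \) and \( S(\a_{(1)})\a_{(2)}=\e(\a) \). The other side is symmetric. All these maps preserve the grading by exterior degree and the filtration by tensor length, since \( \D \) and \( \ast \) are additive in the order \( s+t \).

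Finally, for the lifted \( \p \), given by \eqref{eq:boundary} with \( \Ad_p^{-1}(\a_i) \) left-multiplied in \( \otimes(\fg) \), degree and filtration behaviour is visible from the formula: it lowers \( k \) by one and raises the order by one. Commutation with \( \tilde\D \) is a direct computation. Write \( \p=\p_1+\p_2 \), where \( \p_1 \) moves a primitive \( \a_i \) into \( \otimes(\fg) \) and \( \p_2 \) applies the bracket. Since \( \Ad_p^{-1}\a_i \) is primitive, \( \D(xv)=(x\otimes1+1\otimes x)\D v \), and shuffle signs match the coderivation property of \( \p_1 \). The term \( \p_2 \) is the Chevalley–Eilenberg-type coderivation of \( \wedge(\fg^-) \) and is a coderivation on its own. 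Neither argument uses \( \p^2=0 \), which fails in \( \otimes(\fg) \) because \( xy-yx\neq[x,y] \) there; this explains why the lift is not a differential.
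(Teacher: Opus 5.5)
Your smash-product step fails as you have set it up. Give \( A=\G^{finite}(G\times\wedge(\fg^-)) \) the product it inherits from \eqref{eq:convolution}, namely \( (q,\a)(r,\b)=(qr,\a\wedge\Ad_q\b) \). Then the action \( (p,v)\cdot(q,\a)=(pq,\Ad_p[v,\a]) \) is not a module-algebra action. Take the grouplike \( h=(p,1) \). On one side, \( h\cdot\big((q,\a)(r,\b)\big)=(pqr,\Ad_p\a\wedge\Ad_{pq}\b) \). On the other, \( (h\cdot(q,\a))(h\cdot(r,\b)) \) lies over \( pqpr \). The unit axiom also fails: \( h\cdot 1_A=(p,1)\neq\e(h)1_A=(e,1) \). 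Giving \( A \) the fiberwise product instead does not help. The Leibniz identity then holds, but for \( a=(p,\a) \), \( b=(q,\b) \) the factor \( a(h_{(1)}\cdot b) \) vanishes unless \( q=e \), so the smash product cannot reproduce \eqref{eq:convolution}. Your three facts only check the \( \wedge(\fg^-) \)-component. The base-point component is exactly what breaks, and the \( C^\i(G) \)-balancing you defer is not bookkeeping that can be completed. So Sweedler's theorem cannot be invoked, and associativity and the bialgebra compatibility, which you also route through the smash product, are unsupported as written. The Remark you follow is itself only approximate (``almost''). The paper's proof of this theorem does not use it; it reruns Propositions~\ref{prop:assoc}, \ref{prop:bialg}, \ref{prop:hopf} and part (c) of Proposition~\ref{prop:bdry} with products taken in \( \otimes(\fg) \).

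The repair uses exactly your ingredients: drop the base point from \( A \). Take \( A=\wedge(\fg^-) \), and let \( H' \) act by \( (p,v)\cdot\b=\Ad_p[v,\b] \), where \( H' \) has product \( (p,v)(q,w)=(pq,\Ad_q^{-1}(v)w) \) and coproduct \( (p,v_{(1)})\otimes(p,v_{(2)}) \). Each axiom then follows from a fact you already have:
\begin{itemize}
\item the module property from \( \Ad_q[x,\g]=[\Ad_q x,\Ad_q\g] \) and \( [vw,\b]=[v,[w,\b]] \);
\item Leibniz from the derivation property together with \( \Ad_p \) being an algebra automorphism;
\item the unit axiom from \( (p,v)\cdot 1=\e(v)1 \);
\item the module-bialgebra condition from \( H' \)-linearity of the coproduct and counit of \( \wedge(\fg^-) \).
\end{itemize}
The map \( \a\#(p,v)\mapsto(p,v\otimes\a) \) identifies \( A\#H' \) with the lifted convolution algebra, over \( \R \) and with no balancing. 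Indeed, \( (\a\#(p,v))(\b\#(q,w))=\a\wedge\Ad_p[v_{(1)},\b]\#(pq,\Ad_q^{-1}(v_{(2)})w) \), which is \eqref{eq:convolution} after using cocommutativity of \( \otimes(\fg) \). Sweedler's theorem and the cocommutative smash-product bialgebra criterion (with the Koszul sign \( (-1)^{|\a_{(2)}||\b_{(1)}|} \)) then give associativity and multiplicativity of \( \tilde\D,\tilde\e \). The formula \( (1\#S(h))(S(\a)\#1) \) even reproduces \eqref{eq:antipode}. This gives a conceptual proof where the paper recomputes by hand.

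Your antipode check is essentially the paper's. Your treatment of \( \p \) via \( \p=\p_1+\p_2 \) is correct and genuinely different. \( \p_1 \) is a coderivation because \( \Ad_p^{-1}\a_i \) is primitive in \( \otimes(\fg) \). \( \p_2 \) is the coderivation coextending the bracket \( \wedge^2\to\wedge^1 \), which needs no Jacobi identity. This replaces the shuffle bookkeeping of part (c) of Proposition~\ref{prop:bdry} and makes transparent why only that part survives the lift.
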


The differential graded Hopf algebra structure on \( \G^{finite}(\U(G)) \) is a special case of a more general set-up: 

\begin{thm}\label{thm:hopf2}
	If \( \A: H\to Aut(\fh) \) is a representation of a group \( H \) on a Lie algebra \( \fh \), then the formulas \eqref{eq:boundary}-\eqref{eq:antipode}, replacing the adjoint representation with \( \A \), define a filtered differential graded Hopf algebra structure on \( (U(\fh)\otimes \wedge(\fh^-))[H] \). 
\end{thm}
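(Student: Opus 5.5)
The plan is to verify the Hopf axioms directly on $B=(U(\fh)\otimes\wedge(\fh^-))[H]$, using only that $\A$ acts by Lie algebra automorphisms. First, extend $\A_p$ to automorphisms of $U(\fh)$ and of $\wedge(\fh^-)$. Next, extend the bracket to an action $[\cdot,\cdot]:U(\fh)\otimes\wedge(\fh^-)\to\wedge(\fh^-)$. This action makes $\wedge(\fh^-)$ a left $U(\fh)$-module algebra: $\fh$ acts by derivations on $\wedge(\fh^-)$, so $[v,\a\wedge\b]=[v_{(1)},\a]\wedge[v_{(2)},\b]$. It is also a module coalgebra, since derivations extended from $\fh^-$ are coderivations. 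Both facts hold for any Lie algebra, and the action is $\A$-equivariant:
\[ \A_p[v,\a]=[\A_p v,\A_p\a]. \]

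Then I would reorganize the product \eqref{eq:convolution} as an iterated smash/crossed product. Let $H$ act on $U(\fh)$ and on $\wedge(\fh^-)$ through $\A$. Let $U(\fh)[H]$ carry the twisted group-algebra product
\[ (p,v)(q,w)=(pq,\A_q^{-1}(v)w). \]
This is a Hopf algebra, namely the smash product $U(\fh)\rtimes \R[H]$ with the right action, with coproduct $(p,v)\mapsto (p,v_{(1)})\otimes(p,v_{(2)})$ and antipode $(p^{-1},\A_p S(v))$. Let $U(\fh)[H]$ act on $\wedge(\fh^-)$ by $(p,v)\cdot\b=\A_p[v,\b]$. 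This is a well-defined action: checking $((p,v)(q,w))\cdot\b=(p,v)\cdot((q,w)\cdot\b)$ reduces to equivariance. By the previous paragraph the action makes $\wedge(\fh^-)$ a module algebra. Formula \eqref{eq:convolution} then reads $(\a\#(p,v))(\b\#(q,w))=\a\wedge (p,v_{(1)})\cdot\b\,\#\,(p,v_{(2)})(q,w)$, up to reordering tensor factors. This is exactly Sweedler's smash product, so associativity and unitality follow from the standard theorem.

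For the coalgebra structure \eqref{eq:coproduct}–\eqref{eq:counit} to be compatible with the product, I need both $\wedge(\fh^-)$ and $U(\fh)[H]$ to be cocommutative up to the Koszul sign, and $\wedge(\fh^-)$ to be a module coalgebra. Under these conditions the smash product of a graded-cocommutative Hopf algebra with a module bialgebra is a bialgebra. This is the Molnar smash coproduct being trivial: I check multiplicativity of $\D$ by expanding both sides with Sweedler notation and using cocommutativity to reorder. Grading and filtration are respected because $\A_p$ and brackets preserve degree in $\wedge$ and do not raise the $U(\fh)$-filtration. The antipode of a smash product of Hopf algebras with cocommutative $H$ is $S(\a\#h)=(1\#S(h))(S(\a)\#1)$. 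Computing it with the formulas above should reproduce \eqref{eq:antipode}, and then the antipode axiom is inherited.

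The main obstacle is the differential \eqref{eq:boundary} with $\Ad$ replaced by $\A$. I will show three things: $\tilde\p^2=0$, that $\tilde\p$ is a coderivation, and that it is a graded derivation for $\ast$. For $\p^2=0$ and the coderivation property I would recognize $\p$ fiberwise at $p$ as a Chevalley–Eilenberg (Koszul) differential. Concretely, it is the differential of $U(\fh)\otimes\wedge(\fh)$ computing $\mathrm{Tor}$, with $\fh$ acting on $U(\fh)$ by left multiplication twisted by $\A_p^{-1}$, which is a Lie homomorphism. Since $\A_p^{-1}$ is an automorphism, this reduces to the standard case $p=e$. The Leibniz property with respect to $\ast$ is the hard computation. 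I would reduce it to generators: $\ast$ is generated by elements $(p,1\otimes1)$, $(e,x\otimes1)$ and $(e,1\otimes y)$ for $x,y\in\fh$. It then suffices to check the derivation property on products of these generators, together with the resulting relations, using equivariance of the bracket and the identity $\A_{pq}=\A_p\A_q$. Finally, $S\p=\pm\p S$ follows formally: $\p S$ and $S\p$ are both determined as convolution-inverse–type solutions, since a derivation that is also a coderivation commutes with $S$ up to the stated sign.
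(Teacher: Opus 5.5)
Your route is correct, and it differs from the paper's. The paper proves Theorem~\ref{thm:hopf2} by brute force. It expands each of the following directly from \eqref{eq:boundary}--\eqref{eq:antipode}: associativity, multiplicativity of $\D$ and $\e$, both antipode identities, $\p^2=0$, the Leibniz and coderivation rules, and the $\p$--$S$ relation. The smash-product picture appears only in Remark 2 and is never used.

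You instead identify the algebra with $\wedge(\fh^-)\#\bigl(U(\fh)\rtimes\R[H]\bigr)$, and this checks out:
\begin{itemize}
\item $(p,v)(q,w)=(pq,\A_q^{-1}(v)w)$ is a cocommutative Hopf algebra with antipode $(p^{-1},\A_pS(v))$.
\item $(p,v)\cdot\b=\A_p[v,\b]$ is an action making $\wedge(\fh^-)$ a module algebra and module coalgebra.
\item Cocommutativity of the acting Hopf algebra is exactly what makes the tensor-product coalgebra multiplicative; none is needed on $\wedge(\fh^-)$.
\item $(1\#S(p,v))(S(\a)\#1)$ reproduces \eqref{eq:antipode} once you use cocommutativity of $U(\fh)$.
\end{itemize}
This gives all the Hopf axioms almost without computation and explains where \eqref{eq:antipode} comes from. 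The paper's verification is self-contained but long and easy to get wrong.

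The differential is where your sketch still needs real work. It can be organized so that only one small computation remains.
\begin{itemize}
\item For the Leibniz rule you need no relations. The set of $g$ with $\p(g\ast b)=\p g\ast b+(-1)^{|g|}g\ast\p b$ for all $b$ is closed under $\ast$, so you only need to test generators.
\item $g=(p,1\otimes1)$ is immediate from $\A_{pq}^{-1}\A_p=\A_q^{-1}$.
\item $g=(e,1\otimes y)$ is the identity $\p\lambda_y+\lambda_y\p=\mu_y$. Here $\lambda_y$ and $\mu_y$ are left $\ast$-multiplication by $(e,1\otimes y)$ and $(e,y\otimes 1)$; the identity takes a few lines from \eqref{eq:boundary}.
\item $g=(e,x\otimes1)$ follows as $\p\mu_x=\p\lambda_x\p=\mu_x\p$, once $\p^2=0$ is known from your Chevalley--Eilenberg identification.
\item The coderivation property then need not be quoted for the Koszul complex. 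Both $\D\p$ and $(\p\otimes1+1\otimes\p)\D$ are graded $\D$-derivations, and they agree on generators.
\end{itemize}
Conceptually, the algebra is $U(\fh\ltimes s\fh)\rtimes\R[H]$, where $s\fh$ is an odd abelian copy of $\fh$ and $\p$ is induced by $sy\mapsto y$.

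Finally, your convolution-inverse argument gives $\p S=S\p$ exactly, as in any dg Hopf algebra, not ``up to a sign''. For $x=(e,1\otimes y_0\wedge y_1)$, \eqref{eq:antipode} gives $Sx=x$ and $S\p x=\p x\neq0$ directly. So the degree-dependent sign stated after Theorem~\ref{thm:hopf1} is not what you should aim to reproduce.
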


It follows from \eqref{eq:boundary}-\eqref{eq:antipode} that if \( H' \) is a subgroup of \( H \), then \( (U(\fh)\otimes \wedge(\fh^-))[H'] \) is a differential Hopf sub-algebra of \( (U(\fh)\otimes \wedge(\fh^-))[H] \). In particular, if \( e\in H \) is the identity element, then \( (U(\fh)\otimes \wedge(\fh^-))[\{e\}] \) is a differential Hopf sub-algebra of \( (U(\fh)\otimes \wedge(\fh^-))[H] \), and contains \( U(\fg) \) and \( \wedge(\fh^-) \) as Hopf sub-algebras: the former as the \( 0 \)-graded component and the latter as the \( 0 \)-filtered component.

\begin{thm}\label{thm:hopf4}
	If \( H \) is topological (resp. Lie) and \( \A \) is continuous then the Hopf algebra maps from Theorem \ref{thm:hopf2} define continuous (resp. smooth) vector bundle homomorphisms which lift to \( H\times (\otimes(\fh)\otimes \wedge(\fh^-)) \), and Theorem \ref{thm:hopf3} holds in this context. 
\end{thm}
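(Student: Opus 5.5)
Theorem \ref{thm:hopf4} reduces to Theorem \ref{thm:hopf2} plus a regularity check of the explicit formulas, so I will take the algebraic content of Theorem \ref{thm:hopf2} as given. The work is to show that the structure maps, read as maps of trivial bundles over \(H\), \(H\times H\) and \(H\), depend continuously (resp. smoothly) on the base point. Throughout I fix the grading \(k\) and filtration \(r\). On a fixed-\((k,r)\) piece the fibre \(U^r(\fh)\otimes\wedge^k(\fh^-)\) is finite dimensional, namely the image of \(\bigoplus_{s\le r}\fh^{\otimes s}\otimes\wedge^k\fh^-\). Each piece \(H\times(U^r(\fh)\otimes\wedge^k(\fh^-))\) is therefore an honest finite-rank trivial vector bundle, and the structure maps send such pieces to such pieces by the filtration bounds of Theorem \ref{thm:hopf2}.

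The first step is to inspect \eqref{eq:boundary}--\eqref{eq:antipode} with \(\Ad\) replaced by \(\A\), and to separate the ingredients that do not depend on the base point from those that do. The base-independent ingredients are fixed linear maps of the finite-dimensional pieces:
\begin{itemize}
\item the coproducts and counits of \(U(\fh)\) and \(\wedge(\fh^-)\);
\item the antipodes \(S\);
\item multiplication in \(U(\fh)\) and wedge product;
\item the bracket action \([\cdot,\cdot]:U(\fh)\otimes\wedge(\fh^-)\to\wedge(\fh^-)\).
\end{itemize}
The base point enters in only two ways. The first is through the extended representations \(\A_p\) on \(\fh^{\otimes s}\), \(U(\fh)\) and \(\wedge(\fh^-)\), together with \(\A_p^{-1}=\A_{p^{-1}}\). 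The second is through the covering maps on the base: identity, multiplication \((p,q)\mapsto pq\), and inversion \(p\mapsto p^{-1}\). The extension of \(\A_p\) to \(\fh^{\otimes s}\) is \(\A_p^{\otimes s}\), whose matrix entries are polynomials in those of \(\A_p\). It descends to \(U^r(\fh)\) and \(\wedge^k\) because \(\A_p\) acts by Lie algebra automorphisms. So \(p\mapsto \A_p\) on each piece is continuous (resp. smooth) whenever \(\A:H\to Aut(\fh)\subset GL(\fh)\) is, and likewise \(p\mapsto \A_{p^{-1}}\), since inversion is continuous (resp. smooth).

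The second step assembles these facts. Each bundle map is a composition of the following pieces:
\begin{itemize}
\item the covering map on the base;
\item a fibrewise linear map built from constant tensors and the entries of \(\A_p\), \(\A_q\), \(\A_p^{-1}\), or, in the Remark's action, \(\A_p\);
\item finite sums.
\end{itemize}
For \(\tilde\ast\), for example, the fibre map \((v\otimes\a)\otimes(w\otimes\b)\mapsto \A_q^{-1}(v_{(1)})w\otimes\a\wedge\A_p[v_{(2)},\b]\) has matrix coefficients that are continuous (resp. smooth) in \((p,q)\). It covers the continuous (resp. smooth) group law, so it is a vector bundle homomorphism \(\pi_1^*E\otimes\pi_2^*E\to E\) covering multiplication. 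The same argument handles \(\tilde\p\), \(\tilde S\) (covering inversion), and the constant maps \(\tilde\D,\tilde\e\).

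The third step is the lift to \(H\times(\otimes(\fh)\otimes\wedge(\fh^-))\). I would define lifted maps by the same formulas with \(U(\fh)\) replaced by the tensor algebra \(\otimes(\fh)\). This uses that the tensor algebra is a Hopf algebra, with \(\fh\) primitive and antipode \(x_1\cdots x_s\mapsto(-1)^s x_s\cdots x_1\). It also uses that \(\A_p^{\otimes}\) and the bracket action \([\cdot,\cdot]:\otimes(\fh)\otimes\wedge(\fh^-)\to\wedge(\fh^-)\) are defined by the universal property of \(\otimes(\fh)\) without any relation, with the bracket action still factoring through \(U(\fh)\). Continuity and smoothness follow exactly as in the second step. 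Compatibility with \(\Phi\) holds because \(\Phi\) is a constant fibrewise quotient intertwining each lifted formula with the original one.

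The main obstacle is the final claim that Theorem \ref{thm:hopf3} holds here: the lifted operations must give a filtered graded Hopf algebra on \((\otimes(\fh)\otimes\wedge(\fh^-))[H]\), and the lifted \(\p\) must commute with the coalgebra structure. My plan is to rerun the identity checks from the proof of Theorem \ref{thm:hopf2} and confirm that none of them uses the relation \(xy-yx=[x,y]\) in \(U(\fh)\), only the Hopf axioms and the module-algebra axioms. Here the bracket action factors through \(\otimes(\fh)\to U(\fh)\), so \(\wedge(\fh^-)\) is an \(\otimes(\fh)\)-module algebra via \(\A\)-twisted action, and the smash-product description of the Remark applies verbatim. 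The exception is \(\p^2=0\): it uses the commutator relation and is exactly what is expected to fail for the lift. Commutation of \(\p\) with \(\D\) uses only primitivity of elements of \(\fh\) and the coderivation property of the Chevalley--Eilenberg-type term, so it should survive.
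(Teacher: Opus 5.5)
Your proposal follows the paper's own argument: read continuity/smoothness off the explicit formulas, lift by the same formulas with $\otimes(\fh)$ in place of $U(\fh)$, and rerun the Hopf-algebra and $\p$--$\D$ checks, none of which uses $xy-yx=[x,y]$. You add some welcome care by working on finite-rank pieces; in the Lie case you should also note that a continuous $\A$ into the Lie group $Aut(\fh)$ is automatically smooth, since only continuity is assumed.

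One correction: $\p^2=0$ is not the only identity lost in the lift. The Leibniz rule for $\p$ over $\ast$ rests on $[x_{(1)},y]x_{(2)}=xy$; testing it on $(p,x\otimes 1)\ast(q,1\otimes\b_0)$ with $x,\b_0\in\fh$ already requires $x\b_0=\b_0x+[x,\b_0]$. The relation $\p S=\pm S\p$ rests on the same commutation relation, so, as the paper says, only part (c) of Proposition \ref{prop:bdry} survives. This is harmless here, because Theorem \ref{thm:hopf3} only asserts compatibility of the lifted $\p$ with the coalgebra structure.
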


\subsection{Outline}
We will first prove that the convolution product of compactly supported currents has the formula given by \eqref{eq:convolution}. We will then prove Theorem \ref{thm:hopf2} directly using \eqref{eq:boundary}-\eqref{eq:antipode} as definitions for the Hopf algebra operations, and this together with the above will imply Theorem \ref{thm:hopf1}. We will then prove Theorem \ref{thm:hopf4}, which will imply Theorem \ref{thm:hopf3}.




\section{Convolution Product}
\begin{defn}\label{def:conv}
	Let \( A \) and \( B \) be compactly supported currents in \( G \). The convolution product \( A\ast B \) is the compactly supported current in \( G \) defined by  \( A\ast B (\o) = A\otimes B (m^* \o) \), where \( m^* \) is the pullback by the group multiplication \( m: G\times G\to G \) and \( A\otimes B \) is the tensor product of \( A \) and \( B \). The tensor product \( A\otimes B \) is the current on \( G\times G \) uniquely determined by the property \( A\otimes B (\pi_1^* \eta \wedge \pi_2^* \zeta)=A(\eta)B(\zeta) \) for forms \( \eta \) and \( \zeta \) on \( G \).
\end{defn}

Our goal for this section will be to derive \eqref{eq:convolution}. It will be useful to temporarily keep track of right- as well as left-invariant differential operators: we will use the notation \( [p, v\otimes \a \otimes u] \), where \( p\in G \), \( v\in U(\fg) \), \( \a\in \wedge(\fg^-) \) and \( u\in U(\fg^-) \), to denote the current \( \o \mapsto (v\circ u (\o))_p(\a) \), where \( u \) is interpreted as a right-invariant differential operator on \( C^\i(G) \). Note that as differential operators, \( v\circ u = u\circ v \), so the order of application does not matter. First, we derive the following special case for the convolution product:
\begin{lem}\label{lem:conv0}
	\[ [p, 1\otimes \a \otimes 1] \ast [q, 1\otimes \b \otimes 1]= [pq, 1\otimes \a \wedge \Ad_p\b \otimes 1]. \]
\end{lem}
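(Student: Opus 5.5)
Both sides are order-zero currents, that is, Dirac $k$-vectors, so I will compare them by evaluating each on an arbitrary form $\o$. The current $[p,1\otimes\a\otimes 1]$ is the Dirac current $\o\mapsto \o_p(\a_p)$. Here $\a_p$ denotes the value at $p$ of the right-invariant multivector field determined by $\a\in\wedge(\fg^-)$, namely $\a_p = (R_p)_*\a$, the pushforward of $\a$ (viewed at $e$) by right translation $R_p: g\mapsto gp$. Likewise $[q,1\otimes\b\otimes 1]$ is the Dirac current $\o\mapsto\o_q(\b_q)$ with $\b_q=(R_q)_*\b$.

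The first step is to identify the tensor product in Definition \ref{def:conv}. By the defining property $A\otimes B(\pi_1^*\eta\wedge\pi_2^*\zeta)=A(\eta)B(\zeta)$ and uniqueness, $[p,1\otimes\a\otimes1]\otimes[q,1\otimes\b\otimes1]$ is the Dirac current at $(p,q)\in G\times G$ carrying the multivector $(\iota_1)_*\a_p\wedge(\iota_2)_*\b_q$. Here $\iota_1,\iota_2$ are the inclusions $T_pG\to T_{(p,q)}(G\times G)$ and $T_qG\to T_{(p,q)}(G\times G)$. To justify this, I check the claimed Dirac current on forms of the type $\pi_1^*\eta\wedge\pi_2^*\zeta$. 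Only the bidegree $(\deg\a,\deg\b)$ part contributes, and pairing a wedge of pullbacks against a wedge of vectors from complementary factors splits as the product $\eta_p(\a_p)\,\zeta_q(\b_q)$. The ordering of the wedge product also takes care of the signs.

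Next, applying $m^*$, the convolution is the Dirac current at $pq$ carrying the multivector $m_*\big((\iota_1)_*\a_p\wedge(\iota_2)_*\b_q\big)=(m\circ\iota_1)_*\a_p\wedge(m\circ\iota_2)_*\b_q$. These are the differentials of the partial multiplication maps, $m\circ\iota_1 = R_q$ (the map $x\mapsto xq$) at $p$ and $m\circ\iota_2=L_p$ (the map $y\mapsto py$) at $q$. Hence the first factor is $(R_q)_*(R_p)_*\a = (R_{pq})_*\a$, which is exactly the right-invariant field $\a$ evaluated at $pq$. The second factor is $(L_p)_*(R_q)_*\b = (R_q)_*(L_p)_*\b = (R_q)_*(R_p)_*(R_p^{-1}L_p)_*\b = (R_{pq})_*(\Ad_p\b)$, using that left and right translations commute and that $\Ad_p = (L_p R_{p^{-1}})_*$ at $e$. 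Therefore the total multivector is $(R_{pq})_*(\a\wedge\Ad_p\b)$, which is the current $[pq,1\otimes \a\wedge\Ad_p\b\otimes 1]$.

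The main obstacle is the sign and ordering bookkeeping in the first step: showing the tensor product of the two Dirac currents is the Dirac current of $\a_p\wedge\b_q$ in that order, with no Koszul sign. A second point to check is that the identification of $\wedge(\fg^-)$ with right-invariant fields introduces no sign. Since only the group-level translations $R_p$, $L_p$ enter, and never the bracket, the fact that $\fg^-$ carries the negated bracket plays no role here. For the first point, I would verify the pairing convention on decomposable $\eta,\zeta$ in local product coordinates, using the determinant formula for pairing a wedge of forms with a wedge of vectors.
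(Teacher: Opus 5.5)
Your proposal is correct and follows essentially the same route as the paper: identify the point currents with multivectors, represent the tensor product by $(\iota_1)_*\a\wedge(\iota_2)_*\b$ at $(p,q)$, and push forward by the multiplication map to get $(R_q)_*\a\wedge(L_p)_*\b$ at $pq$. You also spell out the steps the paper compresses into ``the result follows'': the no-sign check for the tensor product, and the rewriting $(L_p)_*(R_q)_*\b=(R_{pq})_*\Ad_p\b$ in terms of right-invariant fields.
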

\begin{proof}
	A multi-vector \( \a\in \wedge(T_pG) \) determines a current \( T_\a \), given by \( T_\a(\o)=\o_p(\a) \). The identification between \( \a \) and \( T_\a \) is natural in the sense that pushforward of currents and pushforward of multi-vectors agree. The current \( T_\a \otimes T_\b \) is determined by the multi-vector \( {\iota_1}_* \a \wedge {\iota_2}_* \b\in \wedge(T_{(p,q)}G\times G) \), where \( \iota_i: G\to G\times G \) is the inclusion into the \( i \)-th factor, \( i=1,2 \). The pushforward of \( {\iota_1}_* \a \wedge {\iota_2}_* \b \) by the group law is the multi-vector \( {R_q}_* \a \wedge {L_p}_* \b \in \wedge(T_{pq} G) \), and the result follows.
\end{proof}

This gives a formula for \( \ast \) on \( \U^0(G) \). Now we extend \( \ast \) to the rest of \( \U(G) \).
\begin{lem}\label{lem:conv1}
	\( [p, 1\otimes \a\otimes u] \ast [q, w\otimes \b\otimes 1] = [pq, w\otimes \a\wedge \Ad_p\b \otimes u]. \)
\end{lem}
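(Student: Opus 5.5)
The plan is to reduce Lemma \ref{lem:conv1} to Lemma \ref{lem:conv0} by moving the differential operators outside the convolution. The key fact is how invariant operators interact with the group law \( m(x,y)=xy \).

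Let \( X \in \fg \). Write \( X^L \) for the left-invariant vector field on \( G \) equal to \( X \) at \( e \), and \( X^R \) for the right-invariant one. Consider the vector field \( (X^R,0) \) on \( G\times G \), acting only in the first factor. Its flow is \( (x,y)\mapsto(\exp(tX)x,y) \), which \( m \) carries to left translation \( xy\mapsto \exp(tX)xy \). That is the flow of \( X^R \) on \( G \), so \( (X^R,0) \) is \( m \)-related to \( X^R \). Likewise \( (0,X^L) \) is \( m \)-related to \( X^L \), since right multiplication in the second factor becomes right multiplication of the product.

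Lie derivatives along \( m \)-related fields commute with pullback. Hence, for any form \( \o \),
\[ \L_{(X^R,0)}\, m^*\o = m^*\L_{X^R}\o \quad\text{and}\quad \L_{(0,X^L)}\, m^*\o = m^*\L_{X^L}\o. \]
Iterating gives the same identities with \( X^R \) replaced by the right-invariant operator \( u\in U(\fg^-) \) and \( X^L \) by the left-invariant operator \( w\in U(\fg) \). The order matches correctly: composition of right-invariant operators corresponds to the opposite algebra, which is why \( u \) lives in \( U(\fg^-) \).

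Next we need a tensor-product compatibility: for currents \( A,B \) on \( G \) and an operator \( D \) acting in one factor,
\[ (A\circ D)\otimes B = (A\otimes B)\circ (D,0), \]
and similarly in the second factor. Here \( A\circ D \) denotes the current \( \o\mapsto A(D\o) \).

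I will check this on forms \( \pi_1^*\eta\wedge\pi_2^*\zeta \), which determine the tensor product by Definition \ref{def:conv}. The Lie derivative along \( (X^R,0) \) is a derivation that kills \( \pi_2^*\zeta \) and satisfies \( \L_{(X^R,0)}\pi_1^*\eta=\pi_1^*\L_{X^R}\eta \). Iterating, the identity holds on such product forms, and extends by linearity and continuity since these forms span a dense subspace.

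Now combine the two steps. We have \( [p,1\otimes\a\otimes u]=T_\a\circ u \) and \( [q,w\otimes\b\otimes 1]=T_\b\circ w \). Therefore
\[ [p,1\otimes\a\otimes u]\ast[q,w\otimes\b\otimes1](\o)=(T_\a\otimes T_\b)\bigl((u,0)(0,w)\,m^*\o\bigr)=(T_\a\otimes T_\b)\bigl(m^*(u\circ w\,\o)\bigr). \]
By Lemma \ref{lem:conv0} this equals \( T_{\a\wedge\Ad_p\b} \) at \( pq \) evaluated on \( u\circ w\,\o \). Since \( u \) and \( w \) commute, this is the current \( [pq,w\otimes\a\wedge\Ad_p\b\otimes u] \).

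The main obstacle is the sign and ordering bookkeeping. We must confirm that the identification of \( \U(G;p) \) with \( U(\fg)\otimes\wedge(\fg^-) \) uses Lie derivatives in a way compatible with the \( m \)-relatedness. In particular, the multivector \( \a \) is evaluated after the operators are applied, so no operator ever hits \( \a \) itself. We must also check that composition in \( U(\fg^-) \) matches composition of right-invariant operators. I would verify this first for \( u=X \) and \( w=Y \) of degree one, then induct on filtration order.
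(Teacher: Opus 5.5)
Your proof is correct, and it takes a different route from the paper's.

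\textbf{The paper's route.} It writes \( u=u_1\circ\cdots\circ u_s \) and \( w=w_1\circ\cdots\circ w_t \). It then expresses a current carrying one extra invariant derivative as a weak limit of difference quotients of currents at translated base points: \( \exp(hu_1)p \) with \( \Ad_{\exp(hu_1)}\a \), and \( q\exp(hw_1) \). Separate weak continuity of \( \ast \) and induction on \( s,t \) finish the argument, with Lemma \ref{lem:conv0} as the base case.

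\textbf{Your route.} You move the operators off the currents and onto the form before convolving. The \( m \)-relatedness of \( (X^R,0) \) with \( X^R \), and of \( (0,X^L) \) with \( X^L \), gives \( (u,0)(0,w)\,m^*\o = m^*(u\circ w\,\o) \). The one-factor compatibility \( (A\circ D)\otimes B=(A\otimes B)\circ(D,0) \) then lets you invoke Lemma \ref{lem:conv0} once, on the form \( u\circ w\,\o \). The order of \( (u,0) \) and \( (0,w) \) is immaterial, since the two commute.

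\textbf{What each buys.} Both proofs rest on the same fact: \( m \) intertwines left translation in the first factor, and right translation in the second, with left and right translation on \( G \). The paper uses the integrated form of this fact, so it needs weak continuity of convolution and an induction. Your infinitesimal form needs only the uniqueness/density property of tensor products of currents, which Definition \ref{def:conv} already assumes. It also handles all \( u,w \) at once.

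For the same reason, the bookkeeping you flag at the end is not a real obstacle. The identity \( \L_{(X^R,0)}m^* = m^*\L_{X^R} \) iterates operator by operator, whatever algebra the compositions are assigned to. No separate induction on filtration order is needed.
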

\begin{proof}
	By linearity, it suffices to prove the result for \( u=u_1\circ\cdots\circ u_s \) and \( w=w_1\circ\cdots\circ w_t \), for right-invariant vector fields \( u_1,\dots,u_s \) and left-invariant vector fields \( w_1,\dots,w_t \).
	
	Note that \[ [p, w\otimes \a \otimes u_1\circ\cdots\circ u_s ] = \lim_{h\to 0} \frac{1}{h} \left([\exp(h u_1)p, w\otimes \Ad_{\exp(h u_1)} \a \otimes u_2\circ\cdots\circ u_s] - [p, w\otimes \a \otimes u_2\circ\cdots\circ u_s ]\right) \]
	and similarly \[ [q, w_1\circ\cdots\circ w_t \otimes \b \otimes u] = \lim_{h\to 0} \frac{1}{h} \left([q \exp(h w_1), w_2\circ\cdots\circ w_t\otimes \b \otimes u] - [q, w_2\circ\cdots\circ w_t\otimes \b \otimes u ]\right), \]
	where the limit is in the sense of weak convergence. The result then follows from separate weak continuity of \( \ast \) and induction on \( s \) and \( t \), using Lemma \ref{lem:conv0} for the \( s=t=0 \) case.
\end{proof}

The next lemma describes how to move between left- and right-invariant differential operators.
\begin{lem}\label{lem:conv2}
	\begin{align*} 
		[p, v\otimes \a \otimes u] &= [p, \Ad_p^{-1}(|S|^{-1}(u_{(1)}))v\otimes [|S|^{-1}(u_{(2)}), \a] \otimes 1]\\
		&= [p, 1\otimes [\Ad_p(S(v_{(1)})), \a] \otimes \Ad_p(|S|(v_{(2)}))u ],
	\end{align*}
	where \( |S|: U(\fg)\to U(\fg^-) \) is the algebra anti-isomorphism defined on \( \fg\subset U(\fg) \) as the map \( x\mapsto x \), and extended using the universal property for \( U(\fg) \). That is, \( |S| \) reverses the order of composition of left-invariant vector fields, but does so without the alternating sign present in the antipode map \( S: U(\fg)\to U(\fg) \). Note that \( |S| \) and \( S \) are co-algebra isomorphisms and commutes with \( \Ad \).
\end{lem}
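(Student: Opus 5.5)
Both identities are statements about one current, \( \o\mapsto (v\circ u(\o))_p(\a) \), and I will prove them by induction on the length of \( u \) (for the first line) and of \( v \) (for the second). The second line is the mirror image of the first, with the roles of left- and right-invariant operators swapped and \( \Ad_p \) replaced by \( \Ad_p^{-1} \), so I concentrate on the first.

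\textbf{Base case.} The basic input is the comparison of a single right-invariant vector field \( x \) with the left-invariant field \( \Ad_p^{-1}(x) \). These two fields agree at \( p \), but they differ as differential operators on forms. On functions, the Lie derivatives \( \L_x \) and \( \L_{\Ad_p^{-1}x} \) have the same value at \( p \). On a form \( \o \), however, evaluated against the right-invariant multivector \( \a \) at \( p \), we have
\[
(\L_x\o)_p(\a) = x_p(\o(\a)) - \o_p(\L_x \a).
\]
The analogous formula holds for the left-invariant field. Since left-invariant fields commute with right-invariant ones, \( \L_{\Ad_p^{-1}x}\a = 0 \). On the right-invariant side, \( \L_x\a \) is given by the bracket of right-invariant fields, which is the bracket in \( \fg^- \) up to the sign convention. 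This yields \( x_p \) acting on \( \a \) equal to \( \Ad_p^{-1}(x) \) acting on \( \a \), plus a correction term \( [x,\a] \). That is precisely the case \( u=x \) of the first formula, where \( |S|^{-1}(x)=x \) and \( \D x = x\otimes 1+1\otimes x \). This is the step where I must carefully fix sign conventions: the bracket in \( \fg^- \), and why \( |S| \) rather than \( S \) appears.

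\textbf{Inductive step.} Write \( u = u'\circ x \) with \( x \) right-invariant. Since left- and right-invariant operators commute, the current \( [p, v\otimes\a\otimes u'x] \) can be written by applying the inductive hypothesis first to \( u' \). This gives a current of the form \( [p, v'\otimes\a'\otimes x] \), with \( v' \) built from \( \Ad_p^{-1}|S|^{-1}(u'_{(1)})\,v \) and \( \a'=[|S|^{-1}(u'_{(2)}),\a] \). I then apply the base case to \( x \). The new left-invariant factor \( \Ad_p^{-1}(x) \) is composed on the side dictated by the order in which the operators act. Because \( |S|^{-1} \) is an anti-homomorphism, this reproduces \( \Ad_p^{-1}|S|^{-1}(u'x)_{(1)} \). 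The bracket terms assemble by the module property \( [a,[b,\a]]=[ab,\a] \) of the extended bracket \( U(\fg)\times\wedge(\fg^-)\to\wedge(\fg^-) \). The coproduct splits correctly because \( |S| \) is a coalgebra map and \( \D \) is multiplicative on \( U(\fg) \). The main obstacle is this bookkeeping: checking that the order reversal from \( |S| \) matches both the composition order and the module action, and that \( \Ad \) commutes with all of these operations. I will verify it on generators \( u=x_1\cdots x_s \) using Sweedler sums over shuffles.

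\textbf{Second identity.} I run the same argument in reverse. Each left-invariant factor \( y \) of \( v \) is converted into the right-invariant field \( \Ad_p(y) \), and the correction now carries the opposite sign, which accounts for \( S \) on \( v_{(1)} \). Alternatively, I can obtain the second line by inverting the first: substituting the first formula into the proposed right-hand side and using the antipode identities \( S(v_{(1)})v_{(2)}=\e(v) \) and \( |S|^{-1}|S|=\mathrm{id} \), I expect to recover \( [p,v\otimes\a\otimes u] \).
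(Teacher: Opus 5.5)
Your proposal is correct, but it is organized differently from the paper's proof.

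\textbf{The paper's route.} The paper does not induct on the length of \( u \). For the first line it sets \( \eta = v(\o) \) and regards the right-invariant multivector \( \a \) at \( p \) as the value of the left-invariant field \( (\Ad_p^{-1}\a)^L \). Since right-invariant operators commute with contraction by left-invariant fields, it gets \( (u\eta)_p(\a) = u\bigl(\eta((\Ad_p^{-1}\a)^L)\bigr)(p) \), a right-invariant operator applied to a \emph{function}. On a function, exchanging right- for left-invariant fields at \( p \) produces no correction terms and merely reverses the order, so \( u \) becomes \( \Ad_p^{-1}|S|^{-1}(u) \) in one stroke. The Leibniz rule over the contraction then generates the coproduct and all bracket terms at once. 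The second line repeats this with \( \mu = u(\o) \), keeping \( \a \) right-invariant and converting \( v \) into \( \Ad_p|S|(v) \).

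\textbf{Your route.} You interleave the exchange and the correction, one field at a time. From \( x^R(p) = (\Ad_p^{-1}x)^L(p) \) and \( \L_{y^L}\a^R = 0 \) you get the rule \( [p, v\otimes\a\otimes x] = [p, \Ad_p^{-1}(x)v\otimes\a\otimes 1] + [p, v\otimes[x,\a]\otimes 1] \). Here \( [x,\a] \) is the \( \fg \)-bracket, i.e. \( -\L_{x^R}\a^R \). You then reassemble general \( u \) using that \( |S|^{-1} \) is anti-multiplicative and a coalgebra map, and that the bracket is a module action. The induction with \( u = u'x \) closes exactly as you describe. Likewise for \( v = v'y \) in the second line, where the flipped correction sign is what supplies \( S(y) = -y \).

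\textbf{Comparison.} The paper's argument is shorter and explains the coproduct conceptually as a Leibniz expansion. Yours uses only first-order facts and makes every sign convention, including why \( |S| \) carries no sign, explicit at each step, at the cost of Sweedler bookkeeping.

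\textbf{One caution.} If you take your shortcut of deriving the second line from the first, the Sweedler factors come out interleaved, as \( v_{(2)}\otimes v_{(3)}S(v_{(1)}) \). Collapsing this to \( v\otimes 1 \) needs the cocommutativity of \( U(\fg) \), not just \( S(v_{(1)})v_{(2)} = \e(v) \).
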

\begin{proof}
	For \( x\in \wedge(T_e G) \), let \( x^L \) (resp. \( x^R \)) denote the left- (resp. right-)invariant vector field equal to \( x \) at \( e \).
	
	Given a form \( \o \), let \( \eta = v(\o) \). Then
	\begin{align*}
		[p, v\otimes \a \otimes u] (\o) &= (u \circ v (\o))_p(\a)\\
		&=(u(\eta))_p(\a)\\
		&=(u(\eta))_p({L_p}_* (\Ad_p^{-1}\a(e)))\\
		&=u(\eta((\Ad_p^{-1}\a(e))^L))_p\\
		&=\Ad_p^{-1}|S|^{-1}(u) (\eta((\Ad_p^{-1}\a(e))^L))_p\\
		&=(\Ad_p^{-1}(|S|^{-1}(u_{(1)}))\eta)_p([|S|^{-1}(u_{(2)}),\a]).
	\end{align*}
	The second-to-last line follows from the fact that left- and right-invariant differential operators commute, and the derivative of a function at \( p \) by a vector field \( X \) depends only on the value of \( X(p) \); thus the right-invariant vector fields constituting \( u \) can be switched to left-invariant vector fields by reversing their order (i.e. applying \( |S|^{-1} \))  and then applying \( \Ad_p^{-1} \). 
	
	Now let \( \mu=u(\o) \). Then
	\begin{align*}
		[p, v\otimes \a \otimes u] (\o) &= (u \circ v (\o))_p(\a)\\
		&=(v(\mu))_p(\a)\\
		&=v(\mu(\a))_p\\
		&=(\Ad_p |S|(v)) (\mu(\a))_p\\
		&=(\Ad_p |S|(v_{(1)})\mu)_p([\Ad_p S(v_{(2)}),\a]).
	\end{align*}
\end{proof}

We can now use Lemmas \ref{lem:conv1} and \ref{lem:conv2} to derive \eqref{eq:convolution}:
\begin{lem}\label{lem:conv}
	The convolution map \( \tilde{\ast}: (G\times G)\times (U(\fg)\otimes \wedge^k(\fg^-))\otimes (U(\fg)\otimes \wedge^k(\fg^-))\to G\times U(\fg)\otimes \wedge^k(\fg^-) \) is given by 
	\[ \tilde{\ast} ((p,q), (v\otimes \a)\otimes (w\otimes \b)) = (pq, \Ad_q^{-1}(v_{(1)}) w\otimes \a\wedge \Ad_p[v_{(2)},\b]). \]
\end{lem}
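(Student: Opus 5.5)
Starting from the two sides of the product, written as \( [p, v\otimes \a\otimes 1] \) and \( [q, w\otimes \b\otimes 1] \), I plan to reduce to the situation of Lemma \ref{lem:conv1}. That lemma needs the left factor to carry only a right-invariant operator, i.e.\ to be of the form \( [p,1\otimes \a'\otimes u] \), while the right factor may carry a left-invariant operator. So the first step is to rewrite the left factor with the second identity of Lemma \ref{lem:conv2}:
\[ [p, v\otimes \a\otimes 1] = [p, 1\otimes [\Ad_p(S(v_{(1)})),\a]\otimes \Ad_p(|S|(v_{(2)}))]. \]
Applying Lemma \ref{lem:conv1} then gives
\[ [pq,\ w\otimes [\Ad_p(S(v_{(1)})),\a]\wedge \Ad_p\b \otimes \Ad_p(|S|(v_{(2)}))]. \]
The last step is to convert the right-invariant operator back to a left-invariant one at the point \( pq \), using the first identity of Lemma \ref{lem:conv2}. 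The factor \( \Ad_{pq}^{-1}|S|^{-1}\Ad_p|S| \) applied to \( v_{(2)} \) collapses to \( \Ad_q^{-1}\Ad_p^{-1}\Ad_p = \Ad_q^{-1} \), because \( |S| \) commutes with \( \Ad \). So we get
\[ [pq,\ \Ad_q^{-1}(v_{(2)})w\otimes [v_{(3)}, [\Ad_p S(v_{(1)}),\a]\wedge\Ad_p\b]\otimes 1]. \]
This uses coassociativity and that \( |S| \) and \( \Ad_p \) are coalgebra maps to re-index the Sweedler components.

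The remaining work is algebraic simplification of the \( \wedge(\fg^-) \) factor. The action \( [x,\cdot] \) of \( U(\fg) \) on \( \wedge(\fg^-) \) is the extension of a derivation action, so \( [x,\g\wedge\delta]=[x_{(1)},\g]\wedge[x_{(2)},\delta] \) (a module-algebra property). First I must fix the convention for \( \Ad_p \) versus the placement of \( \Ad_p \) inside the bracket, via \( [x,\Ad_p\b]=\Ad_p[\Ad_p^{-1}x,\b] \); for this I would likely need to recheck whether \( v \) in the current is taken with \( \Ad_p \) or its inverse. Expanding gives
\[ [v_{(3)},[\Ad_pS(v_{(1)}),\a]]\wedge[v_{(4)},\Ad_p\b]. \]
The left piece then combines, via the antipode identity \( S(x_{(1)})x_{(2)}=\e(x) \) after appropriate commutation, to leave \( \a \) unchanged. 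The right piece becomes \( \Ad_p[v_{(2)},\b] \) after the Ad-conjugation. Finally \( \e \) eats one Sweedler index, reproducing \( \Ad_q^{-1}(v_{(1)})w\otimes\a\wedge\Ad_p[v_{(2)},\b] \).

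I expect the main obstacle to be bookkeeping of the order and placement of \( \Ad_p \) in the antipode cancellation. The term \( [\Ad_pS(v_{(1)}),\cdot] \) and the later \( [v_{(3)},\cdot] \) must be brought into the form \( [\,\cdot\,\cdot\,] \) of a single product \( v_{(3)}\Ad_p S(v_{(1)}) \), and that product is only counit-like if the Ad-twists match, which depends on how the mixed Sweedler indices are ordered (cocommutativity of \( U(\fg) \) helps here, since it allows arbitrary reordering of Sweedler factors). If the \( \Ad_p \) twists do not match directly, my fallback is to verify the formula on generators: first \( v=x\in\fg \) by differentiating Lemma \ref{lem:conv0} along \( p\exp(hx) \) as in the proof of Lemma \ref{lem:conv1}, obtaining \( \Ad_q^{-1}(x)w\otimes\a\wedge\Ad_p\b + w\otimes\a\wedge\Ad_p[x,\b] \). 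I would then extend to products in \( U(\fg) \) by induction using the Leibniz structure encoded in the coproduct.
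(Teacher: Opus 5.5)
Your proposal is the paper's proof: the second identity of Lemma \ref{lem:conv2}, then Lemma \ref{lem:conv1}, then the first identity of Lemma \ref{lem:conv2} at \( pq \), followed by the module-algebra property, cocommutativity to reorder Sweedler indices (the paper does this silently), and \( x_{(1)}S(x_{(2)})=\e(x) \). The \( \Ad_p \)-mismatch you worry about comes from a dropped twist. In the back-conversion the bracket factor is \( |S|^{-1}(\Ad_p|S|(v_{(3)}))=\Ad_p(v_{(3)}) \), not \( v_{(3)} \), so you get \( [\Ad_p(v_{(3)}S(v_{(1)})),\a]\wedge\Ad_p[v_{(4)},\b] \) directly; the antipode cancellation then goes through and the fallback is unnecessary.
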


\begin{proof} 
	By Lemma \ref{lem:conv2}, the convolution \( (p, v\otimes \a) \ast (q, w\otimes \b) \) is equal to
	\[ [p, 1\otimes [\Ad_p(S(v_{(1)})), \a] \otimes \Ad_p(|S|(v_{(2)})) ] \ast [q, w\otimes \b\otimes 1], \]
	and by Lemma \ref{lem:conv1}, this is equal to
	\[ [pq, w\otimes [\Ad_p(S(v_{(1)})), \a] \wedge \Ad_p\b \otimes \Ad_p(|S|(v_{(2)}))]. \]
	Applying Lemma \ref{lem:conv2}, this becomes
	\[ [pq, \Ad_{pq}^{-1}(|S|^{-1}((\Ad_p(|S|(v_{(2)})))_{(1)}))w\otimes [|S|^{-1}((\Ad_p(|S|(v_{(2)})))_{(2)}), [\Ad_p(S(v_{(1)})), \a] \wedge \Ad_p\b ] \otimes 1], \]
	and simplifying, we get
	\begin{align*}
		&\quad\,\,[pq, \Ad_q^{-1}(v_{(1)}) w\otimes \Ad_p \left([v_{(2)}, [S(v_{(3)}), \Ad_p^{-1}\a] \wedge \b ]\right) \otimes 1] \\
		&=[pq, \Ad_q^{-1}(v_{(1)}) w\otimes \Ad_p \left([v_{(2)}, [S(v_{(3)}), \Ad_p^{-1}\a]] \wedge [v_{(4)} ,\b] \right) \otimes 1] \\
		&=[pq, \Ad_q^{-1}(v_{(1)}) w\otimes \Ad_p \left([v_{(2)}S(v_{(3)}), \Ad_p^{-1}\a] \wedge [v_{(4)} ,\b] \right) \otimes 1] \\
		&=(pq, \Ad_q^{-1}(v_{(1)}) w\otimes \a\wedge \Ad_p[v_{(2)},\b]).
	\end{align*}
\end{proof}

\section{Differential Graded Hopf Algebra}
We now verify that the space \( \H:= (U(\fh)\otimes \wedge(\fh^-))[H] \) is a differential graded Hopf algebra, where \( H \) is an arbitrary group and \( \fh \) is an arbitrary Lie algebra equipped with a Lie algebra representation \( \A: H\to Aut(\fh)\), the differential and Hopf algebra operations being defined in \eqref{eq:boundary}-\eqref{eq:antipode}, with \( \A \) taking the place of \( \Ad \). 

\begin{prop}\label{prop:assoc}
	The convolution product \( \ast \) on \( \H \) is associative.
\end{prop}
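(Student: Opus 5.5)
Since \( \ast \) is bilinear, it suffices to check associativity on homogeneous generators \( (p, u\otimes\a) \), \( (q, v\otimes\b) \), \( (r, w\otimes\g) \) with \( p,q,r\in H \), and compute both bracketings directly from \eqref{eq:convolution} (with \( \A \) in place of \( \Ad \)). The base points agree, \( (pq)r=p(qr) \). We will use that \( \A \) acts on \( U(\fh) \) and \( \wedge(\fh^-) \) by Hopf algebra automorphisms, and that the bracket action \( [\cdot,\cdot]:U(\fh)\otimes\wedge(\fh^-)\to\wedge(\fh^-) \) has the following properties:
\begin{enumerate}[(i)]
\item it is a left module action: \( [xy,\a]=[x,[y,\a]] \);
\item it makes \( \wedge(\fh^-) \) a module algebra: \( [x,\a\wedge\b]=[x_{(1)},\a]\wedge[x_{(2)},\b] \);
\item it is \( \A \)-equivariant: \( \A_p[x,\a]=[\A_p x,\A_p\a] \).
\end{enumerate}
Each of these follows from the universal properties used to define the extensions. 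Property (i) is the extension of a Lie action to \( U(\fh) \); property (ii) holds because elements of \( \fh \) act as derivations of the exterior algebra, and the coproduct of \( U(\fh) \) encodes the iterated Leibniz rule; property (iii) holds because \( \A_p \) is a Lie algebra automorphism.

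For the left bracketing, first compute
\[ (p,u\otimes\a)\ast(q,v\otimes\b)=(pq,\A_q^{-1}(u_{(1)})v\otimes \a\wedge\A_p[u_{(2)},\b]). \]
Convolving this with \( (r,w\otimes\g) \) requires the coproduct of \( X=\A_q^{-1}(u_{(1)})v \). Since \( \A_q^{-1} \) is a coalgebra map and \( \D \) is multiplicative,
\[ X_{(1)}\otimes X_{(2)}=\A_q^{-1}(u_{(1)})v_{(1)}\otimes \A_q^{-1}(u_{(2)})v_{(2)}. \]
The result is
\[ \bigl(pqr,\ \A_r^{-1}\bigl(\A_q^{-1}(u_{(1)})v_{(1)}\bigr)w\ \otimes\ \a\wedge\A_p[u_{(3)},\b]\wedge \A_{pq}\bigl[\A_q^{-1}(u_{(2)})v_{(2)},\g\bigr]\bigr). \]
By (i) and (iii), the last factor equals \( \A_p[u_{(2)},\A_q[v_{(2)},\g]] \). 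Here we use cocommutativity of \( U(\fh) \) to reorder Sweedler indices freely.

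For the right bracketing, first compute
\[ (q,v\otimes\b)\ast(r,w\otimes\g)=(qr,\A_r^{-1}(v_{(1)})w\otimes\b\wedge\A_q[v_{(2)},\g]), \]
then convolve \( (p,u\otimes\a) \) with it:
\[ \bigl(pqr,\ \A_{qr}^{-1}(u_{(1)})\A_r^{-1}(v_{(1)})w\ \otimes\ \a\wedge\A_p\bigl[u_{(2)},\b\wedge\A_q[v_{(2)},\g]\bigr]\bigr). \]
Expanding with (ii) gives
\[ \a\wedge\A_p[u_{(2)},\b]\wedge\A_p[u_{(3)},\A_q[v_{(2)},\g]]. \]
The \( U(\fh) \)-factors agree because \( \A_r^{-1} \) is an algebra map and \( \A_{qr}^{-1}=\A_r^{-1}\A_q^{-1} \). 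The exterior factors then agree term by term after relabeling with cocommutativity.

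The main obstacle is justifying (ii) and the Sweedler bookkeeping; the rest is mechanical. If cocommutativity seems too quick, we will instead keep the index order fixed as in the derivation of Lemma \ref{lem:conv}. In that bookkeeping, \( u_{(1)} \) always goes to the \( U \)-factor and the later indices go into brackets in left-to-right order, so no reordering is needed. To establish (ii), we will argue by induction on the length of a PBW monomial \( x=x_1\cdots x_n \). The case \( n=1 \) is the derivation property of \( \mathrm{ad}_{x_1} \) on \( \wedge(\fh^-) \), and the inductive step uses (i) together with multiplicativity of \( \D \).
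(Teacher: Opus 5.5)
Your proof is correct and follows the same route as the paper: the paper writes out both bracketings from \eqref{eq:convolution} and leaves their comparison to the reader, while you supply that comparison using the module-action, module-algebra and equivariance properties, $\A_{qr}^{-1}=\A_r^{-1}\A_q^{-1}$, and cocommutativity of $U(\fh)$ (which the paper also uses tacitly in its Sweedler indexing). One caution: your fallback claim that keeping the index order fixed avoids any reordering is not right. By coassociativity the left bracketing pairs $u_{(3)}$ with $\b$ and $u_{(2)}$ with $\g$, while the right bracketing does the opposite, so cocommutativity is genuinely needed; this is harmless since $U(\fh)$ is cocommutative, but you should drop that remark.
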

\begin{proof}
	On one hand, 
	\begin{align*}
		\left[(p, v\otimes \a)\ast(q, w\otimes \b)\right]&\ast(r, u\otimes \g)\\
		&= (pq, \A_q^{-1}(v_{(1)}) w\otimes \a\wedge \A_p[v_{(2)},\b]) \ast (r, u\otimes \g)\\
		&=(pqr, \A_r^{-1}(\A_q^{-1}(v_{(1)})w_{(1)})u\otimes \a\wedge \A_p[v_{(2)},\b]\wedge \A_{pq}[\A_q^{-1}(v_{(3)})w_{(2)},\g]).
	\end{align*}
	On the other hand,
	\begin{align*}
		(p, v\otimes \a)&\ast\left[(q, w\otimes \b)\ast(r, u\otimes \g)\right]\\
		&=(p, v\otimes \a) \ast(qr, \A_r^{-1}(w_{(1)})u\otimes \b\wedge \A_q [w_{(2)},\g])\\
		&=(pqr, \A_{qr}^{-1}(v_{(1)})\A_r^{-1}(w_{(1)})u\otimes \a\wedge \A_p [v_{(2)}, \b\wedge \A_q[w_{(2)},\g]]).
	\end{align*}
\end{proof}

There is also a unit for \( \ast \), namely \( (e, 1\otimes 1) \). 

\begin{prop}\label{prop:bialg}
	The diagram
	\[\begin{tikzcd}
		{H\otimes H} & H & {H\otimes H} \\
		{H\otimes H\otimes H \otimes H} && {H\otimes H\otimes H\otimes H}
		\arrow["\ast", from=1-1, to=1-2]
		\arrow["{\D \otimes \D}"', from=1-1, to=2-1]
		\arrow["\D", from=1-2, to=1-3]
		\arrow["{id\otimes \tau \otimes id}"', from=2-1, to=2-3]
		\arrow["{\ast \otimes \ast}"', from=2-3, to=1-3]
	\end{tikzcd}\]
	commutes, where \( \tau(x\otimes y)= (-1)^{|x||y|}y\otimes x \) for homogeneous elements \( x \) and \( y \).
\end{prop}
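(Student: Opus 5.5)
We will verify the compatibility \( \D(x\ast y) = (\ast\otimes\ast)(id\otimes\tau\otimes id)(\D x\otimes \D y) \) directly on basis elements \( x=(p, v\otimes \a) \), \( y=(q, w\otimes \b) \), using \eqref{eq:coproduct} and \eqref{eq:convolution} (with \( \A \) in place of \( \Ad \)). We rely on three standard facts:
\begin{enumerate}[(i)]
\item \( U(\fh) \) and \( \wedge(\fh^-) \) are (graded) bialgebras, so their coproducts are multiplicative.
\item \( \A_p \) extends to Hopf algebra automorphisms of \( U(\fh) \) and \( \wedge(\fh^-) \), hence commutes with the coproducts.
\item The action \( [\cdot,\cdot]: U(\fh)\otimes\wedge(\fh^-)\to\wedge(\fh^-) \) makes \( \wedge(\fh^-) \) a left \( U(\fh) \)-module coalgebra.
\end{enumerate}
Fact (iii) means \( \D[v,\b] = [v_{(1)},\b_{(1)}]\otimes[v_{(2)},\b_{(2)}] \). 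It holds because each \( x\in\fh \) acts on \( \wedge(\fh^-) \) as a (degree \( 0 \)) coderivation, being the extension of a derivation that is primitive-compatible. We will check this on generators \( x\in\fh \) and extend multiplicatively using cocommutativity of \( U(\fh) \).

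For the left-hand side, the coproduct of \( (pq, \A_q^{-1}(v_{(1)})w\otimes \a\wedge\A_p[v_{(2)},\b]) \) is constant over the base point. By (i) and (ii) its \( U(\fh) \)-part is \( \A_q^{-1}(v_{(1)})w_{(1)}\otimes \A_q^{-1}(v_{(2)})w_{(2)} \). Applying (i), (ii) and (iii), its \( \wedge \)-part is
\[ \pm\,\a_{(1)}\wedge \A_p[v_{(3)},\b_{(1)}]\otimes \a_{(2)}\wedge\A_p[v_{(4)},\b_{(2)}], \]
where the sign is the Koszul sign \( (-1)^{|\a_{(2)}||\b_{(1)}|} \) from the graded multiplicativity of the exterior coproduct.

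For the right-hand side, \( \D x\otimes\D y = (p, v_{(1)}\otimes\a_{(1)})\otimes(p,v_{(2)}\otimes\a_{(2)})\otimes(q,w_{(1)}\otimes\b_{(1)})\otimes(q,w_{(2)}\otimes\b_{(2)}) \). The twist \( \tau \) swaps the middle factors with sign \( (-1)^{|\a_{(2)}||\b_{(1)}|} \), since the \( U(\fh) \) factors have degree \( 0 \). Applying \( \ast\otimes\ast \) gives a tensor in which each factor is
\[ \bigl(pq,\ \A_q^{-1}(v_{(i)(1)})w_{(i)}\otimes\a_{(i)}\wedge\A_p[v_{(i)(2)},\b_{(i)}]\bigr), \qquad i=1,2. \]
By coassociativity, the indices \( v_{(1)(1)},v_{(1)(2)},v_{(2)(1)},v_{(2)(2)} \) relabel as \( v_{(1)},\dots,v_{(4)} \).

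The main obstacle is that this ordering, \( (v_{(1)},v_{(2)}) \) in the first factor and \( (v_{(3)},v_{(4)}) \) in the second, differs from the left-hand side, where \( v_{(1)},v_{(2)} \) appear in the \( U \)-parts and \( v_{(3)},v_{(4)} \) in the brackets. We resolve this with cocommutativity of \( U(\fh) \), which lets us permute Sweedler indices freely. After that the two sides agree term by term, including the Koszul sign.

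Finally, we will record the compatibility of the counit with \( \ast \): \( \e(x\ast y)=\e(x)\e(y) \) holds since \( \e([v,\b])=\e(v)\e(\b) \). Together with the unit, this gives the bialgebra structure.
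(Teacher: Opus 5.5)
Your proposal is correct and follows the same route as the paper: expand $\D(x\ast y)$ and $(\ast\otimes\ast)(id\otimes\tau\otimes id)(\D x\otimes\D y)$ on basis elements $(p, v\otimes\a)$, $(q, w\otimes\b)$ and match them, with the Koszul sign $(-1)^{|\a_{(2)}||\b_{(1)}|}$ appearing on both sides. Your version is more careful than the paper's, which cites only coassociativity. It makes explicit the module-coalgebra property of the bracket action and the cocommutativity of $U(\fh)$, which is what actually permits reordering the Sweedler indices; the paper's one-line argument leaves both implicit.
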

\begin{proof}
	On one hand, \( \D((p, v\otimes \a)\ast (q, w\otimes \b)) \) is equal to \[ (-1)^{|\a_{(2)}||\b_{(1)}|}(pq, \A_q^{-1}(v_{(1)})w_{(1)}\otimes \a_{(1)}\wedge \A_p[v_{(2)},\b_{(1)}])\otimes (pq, \A_q^{-1}(v_{(3)})w_{(2)}\otimes \a_{(2)}\wedge \A_p[v_{(4)},\b_{(2)}]), \]
	and on the other,
	\[ (-1)^{|\a_{(2)}||\b_{(1)}|}\left[(p, v_{(1)}\otimes \a_{(1)})\ast (q, w_{(1)}\otimes \b_{(1)})\right]\otimes \left[(p, v_{(2)}\otimes \a_{(2)})\ast (q, w_{(2)}\otimes \b_{(2)}) \right] \]
	is equal to the same, by co-associativity of \( \D \) on \( U(\fh) \).
\end{proof}

Additionally, we have 
\begin{align*}
	\e((p, v\otimes \a)\ast (q, w\otimes \b)) &= \e(pq, \A_q^{-1}(v_{(1)}) w\otimes \a\wedge \A_p[v_{(2)},\b])\\
	&= \e(\A_q^{-1}(v_{(1)}) w)\cdot \e(\a\wedge \A_p[v_{(2)},\b]),
\end{align*}
which is equal to zero unless \( \a \) and \( \b \) are both \( 0 \)-vectors, in which case, \( \A \) being extended to \( 0 \)-vectors by the identity, 
\[ 
\e((p, v\otimes \a)\ast (q, w\otimes \b)) = \e(\A_q^{-1}(v) w) \a\b.
\]
The right hand side is zero unless \( v \) and \( w \) are order-zero elements of \( U(\fh) \), i.e. elements of the ground field, in which case \( \e((p, v\otimes \a)\ast (q, w\otimes \b)) = vw\a\b \).

On the other hand, for the same reasons, \( \e(p, v\otimes \a) \cdot \e(q, w\otimes \b) \) equals zero unless \( \a \) and \( \b \) are \( 0 \)-vectors and \( v \) and \( w \) are order zero, in which case \[ \e(p, v\otimes \a) \cdot \e(q, w\otimes \b) = v\a w\b, \] showing that \( \e \otimes \e = \e\circ \ast \) on \( \H \).

Likewise, \( \D(e, 1\otimes 1)= (e, 1\otimes 1) \otimes (e, 1\otimes 1) \) and \( \e(e, 1\otimes 1)= 1 \). This, together with Propositions \ref{prop:assoc} and \ref{prop:bialg}, show that \( \H \) is a  bi-algebra.

Next, with antipode \( S \) given by \eqref{eq:antipode},
\begin{prop}\label{prop:hopf}
	For every \( p\in H \), \( v\in U(\fh) \) and \( \a\in \wedge(\fh^-) \),
	\[ S(p, v_{(1)}\otimes \a_{(1)})\ast (p, v_{(2)}\otimes \a_{(2)}) = (p, v_{(1)}\otimes \a_{(1)})\ast S(p, v_{(2)}\otimes \a_{(2)})=(e, \e(p, v\otimes \a)). \]
\end{prop}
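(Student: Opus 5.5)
We verify both identities by direct substitution into the formulas \eqref{eq:coproduct}, \eqref{eq:convolution} and \eqref{eq:antipode}, with \( \A \) in place of \( \Ad \). Everything then reduces to the Hopf algebra identities in \( U(\fh) \) and \( \wedge(\fh^-) \), namely \( S(x_{(1)})x_{(2)}=x_{(1)}S(x_{(2)})=\e(x) \). We also use three structural facts. First, \( [\cdot,\cdot]\colon U(\fh)\otimes\wedge(\fh^-)\to\wedge(\fh^-) \) is a left module action, so \( [xy,\a]=[x,[y,\a]] \) and \( [1,\a]=\a \). Second, this action is a module-algebra action, \( [x,\a\wedge\b]=[x_{(1)},\a]\wedge[x_{(2)},\b] \), as already used in Lemma \ref{lem:conv}. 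Third, \( \A_p \) is a Hopf algebra automorphism of \( U(\fh) \) and of \( \wedge(\fh^-) \), equivariant for the bracket: \( \A_p[x,\a]=[\A_px,\A_p\a] \). Before starting we record the compatibility of the action with antipodes: \( [x,S(\a)]=S([x,\a]) \), which holds since \( S \) on \( \wedge(\fh^-) \) is \( (-1)^{k} \) on degree \( k \) (up to the degree reversal of wedges, which the action preserves).

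\emph{Right identity.} Write \( S(p,v_{(2)}\otimes\a_{(2)})=(p^{-1},\A_p(S(v_{(2)}))\otimes[S(v_{(3)}),\A_p^{-1}S(\a_{(2)})]) \) and convolve on the left with \( (p,v_{(1)}\otimes\a_{(1)}) \). By \eqref{eq:convolution} the group component is \( pp^{-1}=e \). The \( U(\fh) \) component is \( \A_{p^{-1}}^{-1}(v_{(1)})\A_p(S(v_{(3)}))=\A_p(v_{(1)}S(v_{(3)})) \). The exterior component is
\[ \a_{(1)}\wedge\A_p[v_{(2)},[S(v_{(4)}),\A_p^{-1}S(\a_{(2)})]] = \a_{(1)}\wedge\A_p[v_{(2)}S(v_{(4)}),\A_p^{-1}S(\a_{(2)})]. \]
This is where the main bookkeeping obstacle lies: the Sweedler indices are interleaved (\( v_{(1)} \) pairs with \( v_{(3)} \), \( v_{(2)} \) with \( v_{(4)} \)), so the antipode axiom cannot be applied directly. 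My plan is to compare the result with \( \e \) after pairing against a separating family of functionals, or better, to prove the auxiliary identity
\[ \sum v_{(1)}S(v_{(3)})\otimes v_{(2)}S(v_{(4)}) \;\text{acting as}\; \e(v)\,1\otimes 1 \]
in the relevant sense. Cocommutativity of \( U(\fh) \) is exactly what allows this: we may reorder to \( v_{(1)}S(v_{(2)})\otimes v_{(3)}S(v_{(4)})=\e(v)\,1\otimes1 \). Since the paper's setup uses only \( U(\fh) \) and \( \wedge(\fh^-) \), both of which are (graded) cocommutative, this step is legitimate. After the reduction we obtain \( (e,\ \e(v)\otimes\a_{(1)}\wedge S(\a_{(2)}))=(e,\e(v)\e(\a)) \).

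\emph{Left identity.} Compute \( S(p,v_{(1)}\otimes\a_{(1)})\ast(p,v_{(3)}\otimes\a_{(2)}) \), where \( S(p,v_{(1)}\otimes\a_{(1)})=(p^{-1},\A_p(S(v_{(1)}))\otimes[S(v_{(2)}),\A_p^{-1}S(\a_{(1)})]) \). Applying \eqref{eq:convolution}, with \( \A_p S \) being a coalgebra anti-map and \( U(\fh) \) cocommutative, the \( U(\fh) \) component becomes \( \A_p^{-1}(\A_pS(v_{(1)}))v_{(4)}=S(v_{(1)})v_{(4)} \). The exterior component is
\[ [S(v_{(3)}),\A_p^{-1}S(\a_{(1)})]\wedge\A_{p^{-1}}[\A_pS(v_{(2)}),\a_{(2)}] = [S(v_{(3)}),\A_p^{-1}S(\a_{(1)})]\wedge[S(v_{(2)}),\A_p^{-1}\a_{(2)}]. \]
Using cocommutativity to relabel, and then the module-algebra property in reverse, this equals \( [S(v_{(2)}),\A_p^{-1}(S(\a_{(1)})\wedge\a_{(2)})]=\e(\a)[S(v_{(2)}),1]=\e(\a)\e(v_{(2)}) \). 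The \( U(\fh) \) part then collapses via \( S(v_{(1)})v_{(2)}=\e(v) \). The graded signs from \( \wedge(\fh^-) \) should be tracked throughout, but the antipode identity there already accounts for them.
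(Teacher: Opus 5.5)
Your proposal is correct and follows essentially the same route as the paper's proof. You substitute the coproduct, antipode and convolution formulas, then collapse everything using the module-algebra property of \( [\cdot,\cdot] \), the antipode axioms in \( U(\fh) \) and \( \wedge(\fh^-) \), and cocommutativity of \( U(\fh) \). Your one addition is to state explicitly the cocommutativity step that reorders the interleaved Sweedler indices, such as \( v_{(1)}S(v_{(3)})\otimes v_{(2)}S(v_{(4)}) \) into \( v_{(1)}S(v_{(2)})\otimes v_{(3)}S(v_{(4)}) \); the paper performs this reordering silently.
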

\begin{proof}
	On one hand, 
	\begin{align*}
		S(p, v_{(1)}\otimes \a_{(1)})\ast (p, v_{(2)}\otimes \a_{(2)}) &= (p^{-1}, \A_p(S(v_{(1)}))\otimes [S(v_{(2)}),\A_p^{-1}S(\a_{(1)})]) \ast (p, v_{(3)}\otimes \a_{(2)})\\
		&=(e, S(v_{(1)})_{(1)}v_{(2)}\otimes [S(v_{(3)}), \A_p^{-1} S(\a_{(1)})] \wedge [S(v_{(1)})_{(2)}, A_p^{-1}\a_{(2)}])\\
		&=(e, [S(v), \A_p^{-1} (S(\a_{(1)})\wedge \a_{(2)})])\\
		&=(e, \e(v) \e(\a)).
	\end{align*}
	On the other hand,
	\begin{align*}
		(p, v_{(1)}\otimes \a_{(1)})\ast S(p, v_{(2)}\otimes \a_{(2)}) &= (p, v_{(1)}\otimes \a_{(1)})\ast (p^{-1}, \A_p(S(v_{(2)}))\otimes [S(v_{(3)}),\A_p^{-1}S(\a_{(2)})])\\
		&=(e, \A_p(v_{(1)}S(v_{(2)}))\otimes \a_{(1)}\wedge \A_p [ v_{(3)} S(v_{(4)}), A_p^{-1} S(\a_{(2)})])\\
		&=(e, \e(v) \e(\a)).
	\end{align*}
\end{proof}

Thus, \( \H \) is a Hopf algebra, and it can be seen from \eqref{eq:coproduct}-\eqref{eq:antipode} that the Hopf algebra operations respect the grading (and filtration.) Next, we show that \( \H \) is a differential graded Hopf algebra:

\begin{prop}\label{prop:bdry}
	The following hold for \( \p \) on \( \H \):
	\begin{enumerate}
		\item \( \p^2=0 \);
		\item \( \p \) commutes with \( \ast \), in the sense that if \( \a\in \wedge^k(\fh^-) \), then
	\[ \p((p, v\otimes \a)\ast (q, w\otimes \b))= \p(p, v\otimes \a) \ast (q, w\otimes \b) + (-1)^k(p, v\otimes \a)\ast\p(q, w\otimes \b); \]
		\item\label{third} \( \p \) commutes with \( \D \), in the sense that if \( \a\in \wedge^k(\fh^-) \), then \[ \D \p (p, v\otimes \a) = \p(p, v_{(1)}\otimes \a_{(1)})\otimes (p, v_{(2)}\otimes \a_{(2)}) + (-1)^{|\a_{(1)}|} (p, v_{(1)}\otimes \a_{(1)})\otimes \p(p, v_{(2)}\otimes \a_{(2)}); \] and
		\item \( \p \) commutes with \( S \) up to sign, in the sense that if \( \a\in \wedge^k(\fh^-) \), then \[ \p S (p, v\otimes \a) = (-1)^k S \p (p, v\otimes \a). \]
	\end{enumerate}
\end{prop}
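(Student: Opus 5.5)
We plan to verify the four identities directly from the defining formulas \eqref{eq:boundary}--\eqref{eq:antipode}, with \( \A \) in place of \( \Ad \). The first step is to repackage \( \p \) in a form that is easier to compute with. For \( x\in\fh \), write \( \iota_x \) for the odd derivation (contraction) of \( \wedge(\fh^-) \) dual to \( x \). Equivalently, use the Koszul-type description
\[
\p(p, v\otimes \a) = (p, \A_p^{-1}(\a_{(1)}) v\otimes \a_{(2)}) + (p, v\otimes \delta\a).
\]
Here the first term uses the part of the coproduct of \( \a \) with \( |\a_{(1)}|=1 \), and \( \delta \) is the Chevalley--Eilenberg boundary on \( \wedge(\fh^-) \), that is, the second sum in \eqref{eq:boundary}. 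In this form \( \p \) is the sum of two pieces:
\begin{itemize}
\item a "Koszul" piece \( \p_K \), which moves a degree-one factor of \( \a \) into \( U(\fh) \) after twisting by \( \A_p^{-1} \);
\item the CE piece \( \delta \).
\end{itemize}

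Part (a): we expand \( \p^2 \) into three parts: \( \p_K^2 \), \( \p_K\delta+\delta\p_K \), and \( \delta^2 \). We have \( \delta^2=0 \) by the Jacobi identity in \( \fh^- \). For \( \p_K^2 \), the antisymmetry of \( \a \) turns the double sum into \( \tfrac12[\A_p^{-1}\a_i,\A_p^{-1}\a_j] \) acting on \( v \). This cancels against the cross terms, because \( \A_p^{-1} \) is a Lie algebra automorphism and the bracket in \( \fh^- \) carries the opposite sign.

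Part (c): we use that \( \D \) is the tensor coproduct of \( U(\fh) \) and \( \wedge(\fh^-) \). Both pieces are coderivations with the right Koszul sign:
\begin{itemize}
\item \( \delta \) is a coderivation of the exterior coalgebra \( \wedge(\fh^-) \), the standard CE fact;
\item \( \p_K \) is one because \( \D \) is an algebra map on \( U(\fh) \) and primitive elements \( x \) satisfy \( \D(xv)=xv_{(1)}\otimes v_{(2)}+v_{(1)}\otimes xv_{(2)} \).
\end{itemize}

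Part (b) is the main computation and the expected obstacle. We first reduce the problem using the algebra structure. By associativity (Proposition \ref{prop:assoc}), every element is a product of elements of the following types:
\begin{itemize}
\item \( (p,1\otimes 1) \);
\item \( (e, x\otimes 1) \) with \( x\in\fh \);
\item \( (e, 1\otimes y) \) with \( y\in\fh \), a degree-one element.
\end{itemize}
Indeed, \( (e,1\otimes\a)\ast(e,v\otimes 1)=(e,v\otimes\a) \) by \eqref{eq:convolution}, and similarly \( (p,1\otimes1)\ast(e,\cdot) \) or \( (e,\cdot)\ast(p,1\otimes 1) \) moves the base point, up to applying \( \A_p \). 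Since the difference between the two sides of (b) is a "graded derivation defect", the Leibniz rule then only needs to be checked when each factor is a generator. This reduction is itself justified by associativity together with the formal identity that the set of pairs satisfying Leibniz is closed under products.

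On generators the checks are short:
\begin{itemize}
\item \( \p \) kills \( (p,1\otimes 1) \) and \( (e,x\otimes 1) \);
\item \( \p(e,1\otimes y)=(e,y\otimes 1) \).
\end{itemize}
For instance, \( \p((p,1\otimes1)\ast(e,1\otimes y))=\p(p,1\otimes\A_p y)=(p,y\otimes1) \), which matches \( (p,1\otimes1)\ast(e,y\otimes1)=(p,y\otimes 1) \). The delicate cases are those like \( (e,x\otimes1)\ast(e,1\otimes y) \). Here \eqref{eq:convolution} produces the bracket term \( [x,y] \), which must be matched by the \( \delta \) part of \( \p \), and we will track the signs carefully there.

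Part (d): we use that \( S \) is an antialgebra map and anticoalgebra map. This holds for any antipode of a Hopf algebra, which \( \H \) is by Propositions \ref{prop:assoc}, \ref{prop:bialg} and \ref{prop:hopf}. Consequently \( S\p S^{-1} \) and \( \p \) are both graded derivations of the same type, so by the generator reduction again it suffices to compare \( \p S \) and \( (-1)^kS\p \) on the three generator types, where \eqref{eq:antipode} is explicit: \( S(p,1\otimes1)=(p^{-1},1\otimes1) \), \( S(e,x\otimes1)=(e,-x\otimes1) \), and \( S(e,1\otimes y)=(e,1\otimes -y) \).

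If the generator reduction proves awkward, the fallback is a direct Sweedler-notation computation using \eqref{eq:convolution} together with the identity \( [x,\a\wedge\b]=[x,\a]\wedge\b+\a\wedge[x,\b] \).
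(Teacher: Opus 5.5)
Your part (a) is the paper's cancellation of the groups \(X\), \(Y\), \(Z\), regrouped as \(\p_K^2+(\p_K\delta+\delta\p_K)+\delta^2\). Your coderivation argument for (c) is a reasonable substitute for the paper's shuffle bookkeeping. The real gaps are in (b) and (d).

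\textbf{Part (b).} The claim that Leibniz ``only needs to be checked when each factor is a generator'' is false. Put \(D(a,b)=\p(a\ast b)-\p a\ast b-(-1)^{|a|}a\ast\p b\). Associativity gives
\[ D(a_1\ast a_2,b)=D(a_1,a_2\ast b)-D(a_1,a_2)\ast b+(-1)^{|a_1|}a_1\ast D(a_2,b). \]
So vanishing on pairs of generators only yields \(D(g_1\ast g_2,h)=D(g_1,g_2\ast h)\), which need not be zero. For example, on \(\R[x]\) take the linear map with \(\p 1=0\), \(\p x=1\), \(\p x^2=2x\) and \(\p x^n=0\) for \(n\geq 3\); it satisfies Leibniz on the pair \((x,x)\) but not on \((x,x^2)\). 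What the identity does give is that \(\{a: D(a,\cdot)\equiv 0\}\) is closed under \(\ast\). Hence you must verify \(D(g,b)=0\) for each generator \(g\) and an \emph{arbitrary} \(b=(q,w\otimes\b)\). These checks remain short:
\begin{itemize}
\item for \(g=(p,1\otimes 1)\) it is \(\A\)-equivariance of \(\p\), using \(\A_{pq}^{-1}\A_p=\A_q^{-1}\);
\item for \(g=(e,x\otimes 1)\) it is \(x\b_i-\b_i x=[x,\b_i]\) in \(U(\fh)\), plus \([x,\cdot]\)-equivariance of \(\delta\);
\item for \(g=(e,1\otimes y)\) it is a direct expansion of \(\p(q,w\otimes y\wedge\b)\).
\end{itemize}
With this correction your route to (b) is valid and much shorter than the paper's term-by-term comparison.

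\textbf{Part (d).} Your plan fails at the first nontrivial generator. For \(k=1\),
\[ \p S(e,1\otimes y)=\p(e,1\otimes(-y))=(e,-y\otimes 1)=S(e,y\otimes 1)=S\p(e,1\otimes y), \]
so \(\p S=S\p\neq(-1)^kS\p\). The cause is structural, not a slip. For the Koszul-signed coproduct of Proposition \ref{prop:bialg}, the antipode satisfies \(S(a\ast b)=(-1)^{|a||b|}S(b)\ast S(a)\). Hence \(S\p S^{-1}\) is a derivation of the same type as \(\p\), and agreement on your three generators proves \(\p S=S\p\) in every degree. This is as it must be, since the antipode of a dg Hopf algebra is a chain map. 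The identity you set out to check instead compares \(\p\) with \(a\mapsto(-1)^{|a|}S\p S^{-1}a\). That operator obeys the other rule \(D'(a\ast b)=(-1)^{|b|}D'(a)\ast b+a\ast D'(b)\), so the generator reduction does not apply to it. In fact the statement as written is false for \(k=1\).

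The paper proves (d) by expanding both sides with \(\a=\a_0\wedge\cdots\wedge\a_k\in\wedge^{k+1}(\fh^-)\). It is therefore really aiming at the sign \((-1)^{|\a|-1}\), the introduction's \(S\p=(-1)^{k-1}\p S\) on \(\U_k\). That sign agrees with the sign-free identity only in odd degree. In degree two, Proposition \ref{prop:hopf} forces \(S(y_1\wedge y_2)=y_1\wedge y_2\) in \(\wedge(\fh^-)\), and then both \(\p S\) and \(S\p\) send \((e,1\otimes y_1\wedge y_2)\) to \((e,y_1\otimes y_2-y_2\otimes y_1+1\otimes[y_1,y_2])\).

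So do not try to force the stated sign. Your conjugation argument proves the correct identity \(\p S=S\p\), provided you state the graded anti-multiplicativity of \(S\) explicitly. To avoid using \(S^{-1}\), you can instead show directly that \(\{a:\p Sa=S\p a\}\) is closed under \(\ast\).
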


\begin{proof}
	We proceed in order, suppressing the wedge product symbols in \( \wedge(\fh^-) \) as necessary to avoid visual clutter:
	\begin{enumerate}
		\item Suppose \( \a=\a_0\wedge\cdots \wedge \a_k \), where \( \a_i\in \fh \) for \( i=0,\dots,k \). Then \( \p\circ \p (p, v\otimes \a) \) is equal to
			\[ \p\left(p, \sum_i (-1)^i \A_p^{-1} (\a_i) v \otimes \a_0\cdots\hat{\a}_i\cdots \a_k +\sum_{i<j} (-1)^{i+j+1} v\otimes [\a_i, \a_j] \a_0\cdots \hat{\a}_i\cdots \hat{\a}_j\cdots \a_k\right), \] which is equal to \( (p, X+Y+Z) \), where
			\begin{align*}
				X&= \sum_{j<i}(-1)^{i+j} \A_p^{-1}(\a_j)\A_p^{-1}(\a_i)v\otimes \a_0\cdots\hat{\a}_j\cdots\hat{\a}_i\cdots\a_k \\
				&+ \sum_{i<j}(-1)^{i+j+1} \A_p^{-1}(\a_j)\A_p^{-1}(\a_i)v\otimes \a_0\cdots\hat{\a}_i\cdots\hat{\a}_j\cdots\a_k\\
				&+ \sum_{j<l<i}(-1)^{i+j+l+1}\A_p^{-1}(\a_i)v\otimes [\a_j,\a_l] \a_0\cdots\hat{\a}_j\cdots\hat{\a}_l\cdots\hat{\a}_i\cdots \a_k\\
				&+ \sum_{j<i<l}(-1)^{i+j+l}\A_p^{-1}(\a_i)v\otimes [\a_j,\a_l] \a_0\cdots\hat{\a}_j\cdots\hat{\a}_i\cdots\hat{\a}_l\cdots \a_k\\
				&+ \sum_{i<j<l}(-1)^{i+j+l+1}\A_p^{-1}(\a_i)v\otimes [\a_j,\a_l] \a_0\cdots\hat{\a}_i\cdots\hat{\a}_j\cdots\hat{\a}_l\cdots\a_k\\
				&+ \sum_{i<j} (-1)^{i+j+1} \A_p^{-1}([\a_i,\a_j])v \otimes \a_0\cdots\hat{\a}_i\cdots\hat{\a}_j\cdots\a_k \\
				&+ \sum_{l<i<j}(-1)^{i+j+l} \A_p^{-1}(\a_l)v \otimes [\a_i,\a_j]\a_0\cdots\hat{\a}_l\cdots\hat{\a}_i\cdots\hat{\a}_j\cdots\a_k\\
				&+ \sum_{i<l<j}(-1)^{i+j+l+1} \A_p^{-1}(\a_l)v \otimes [\a_i,\a_j]\a_0\cdots\hat{\a}_i\cdots\hat{\a}_l\cdots\hat{\a}_j\cdots\a_k\\
				&+ \sum_{i<j<l}(-1)^{i+j+l} \A_p^{-1}(\a_l)v \otimes [\a_i,\a_j]\a_0\cdots\hat{\a}_i\cdots\hat{\a}_j\cdots\hat{\a}_l\cdots\a_k,
			\end{align*}
	
			\begin{align*}
				Y&= \sum_{l<i<j}(-1)^{i+j+l} v \otimes [[\a_i,\a_j], \a_l]\a_0\cdots\hat{\a}_l\cdots\hat{\a}_i\cdots\hat{\a}_j\cdots\a_k\\
				&+ \sum_{i<l<j}(-1)^{i+j+l+1} v \otimes [[\a_i,\a_j], \a_l]\a_0\cdots\hat{\a}_i\cdots\hat{\a}_l\cdots\hat{\a}_j\cdots\a_k\\
				&+ \sum_{i<j<l}(-1)^{i+j+l} v \otimes [[\a_i,\a_j], \a_l]\a_0\cdots\hat{\a}_i\cdots\hat{\a}_j\cdots\hat{\a}_l\cdots\a_k,
			\end{align*}
			and 
			\begin{align*}
				Z&= \sum_{l<r<i<j}(-1)^{i+j+l+r} v\otimes [\a_l,\a_r][\a_i,\a_j]\a_0\cdots\hat{\a}_l\cdots\hat{\a}_r\cdots\hat{\a}_i\cdots\hat{\a}_j\cdots\a_k\\
				&+ \sum_{l<i<r<j}(-1)^{i+j+l+r+1} v\otimes [\a_l,\a_r][\a_i,\a_j]\a_0\cdots\hat{\a}_l\cdots\hat{\a}_i\cdots\hat{\a}_r\cdots\hat{\a}_j\cdots\a_k\\
				&+ \sum_{l<i<j<r}(-1)^{i+j+l+r} v\otimes [\a_l,\a_r][\a_i,\a_j]\a_0\cdots\hat{\a}_l\cdots\hat{\a}_i\cdots\hat{\a}_j\cdots\hat{\a}_r\cdots\a_k\\
				&+ \sum_{i<l<r<j}(-1)^{i+j+l+r} v\otimes [\a_l,\a_r][\a_i,\a_j]\a_0\cdots\hat{\a}_i\cdots\hat{\a}_l\cdots\hat{\a}_r\cdots\hat{\a}_j\cdots\a_k\\
				&+ \sum_{i<l<j<r}(-1)^{i+j+l+r+1} v\otimes [\a_l,\a_r][\a_i,\a_j]\a_0\cdots\hat{\a}_i\cdots\hat{\a}_l\cdots\hat{\a}_j\cdots\hat{\a}_r\cdots\a_k\\
				&+ \sum_{i<j<l<r}(-1)^{i+j+l+r} v\otimes [\a_l,\a_r][\a_i,\a_j]\a_0\cdots\hat{\a}_i\cdots\hat{\a}_j\cdots\hat{\a}_l\cdots\hat{\a}_r\cdots\a_k.
			\end{align*}
			
			Each term \( X,Y \) and \( Z \) is zero: The first, second and sixth terms of \( X \) cancel, as do the 3rd and 9th, 4th and 8th, and 5th and 7th. Likewise, the three terms of \( Y \) cancel due to the Jacobi identity. Finally, the 1st and 6th, 2nd and 5th, and 3rd and 4th terms of \( Z \) cancel. 
		\item It will be convenient to assume \( \a=\a_0\wedge\cdots\wedge \a_k\in \wedge^{k+1}(\fh) \) and \( \b=\b_0\wedge\cdots\wedge \b_l\in \wedge^{l+1}(\fh) \).
			On one hand, \( \p(p, v\otimes \a) \ast (q, w\otimes \b) = (pq, X), \) where 
			\begin{align*}
				X &= \sum_{i=0}^k (-1)^i \A_q^{-1} (\A_p^{-1} (\a_i)v_{(1)}) w \otimes \a_0\cdots\hat{\a}_i\cdots\a_k \A_p ([v_{(2)}, \b])\\
				&+ \sum_{i=0}^k (-1)^i \A_q^{-1}(v_{(1)}) w\otimes \a_0\cdots\hat{\a}_i\cdots \a_k [a_i \A_p(v_{(2)}), \A_p (\b)]\\
				&+ \sum_{0\leq i<j\leq k}(-1)^{i+j+1}\A_q^{-1}(v_{(1)})w\otimes [\a_i, \a_j]\a_0\cdots\hat{\a}_i\cdots\hat{\a}_j\dots \a_k \A_p([v_{(2)},\b]),
			\end{align*}
			and \( (p, v\otimes \a) \ast \p(q, w\otimes \b) = (pq, Y), \) where
			\begin{align*}
				Y &= \sum_{i=0}^l (-1)^i \A_q^{-1} (v_{(1)}\b_i) w\otimes \a \A_p([v_{(2)}, \b_0\cdots\hat{\b}_i\cdots \b_l])\\
				&+ \sum_{0\leq i<j\leq l}(-1)^{i+j+1}\A_q^{-1}(v_{(1)})w\otimes \a \A_p([v_{(2)}, [\b_i,\b_j]\b_0 \cdots\hat{\b}_i\cdots\hat{\b}_j\cdots \b_l]).
			\end{align*}
			On the other hand, \( \p((p, v\otimes \a) \ast (q, w\otimes \b)) \) is equal to \( (pq, Z) \), where
			\begin{align*}
				Z &= \sum_{i=0}^k (-1)^i \A_{pq}^{-1} (\a_i)\A_q^{-1}(v_{(1)}) w \otimes \a_0\cdots\hat{\a}_i\cdots\a_k \A_p ([v_{(2)}, \b])\\
				&+ \sum_{i=0}^l (-1)^{i+k+1} \A_q^{-1}([v_{(1)}, \b_i] v_{(2)}) w \otimes \a \A_p ([ v_{(3)}, \b_0\cdots\hat{\b}_i\cdots \b_l])\\
				&+ \sum_{0\leq i<j\leq k} (-1)^{i+j+1} \A_q^{-1}(v_{(1)})w\otimes [\a_i, \a_j] \a_0\cdots\hat{\a}_i\cdots\hat{\a}_j\cdots\a_k \A_p([v_{(2)}, \b])\\
				&+ \sum_{\substack{0\leq i\leq k\\ 0\leq j \leq l}}(-1)^{i+j}\A_q^{-1}(v_{(1)})w\otimes \a_0\cdots\hat{\a}_i\cdots\a_k [\a_i, \A_p(v_{(2)}),\b_j] \A_p([v_{(3)}, \b_0\cdots\hat{\b}_j\cdots\b_l])\\
				&+ \sum_{0\leq i<j\leq l}(-1)^{i+j+1} \A_q^{-1}(v_{(1)})w\otimes \left[\A_p([v_{(2)},\b_i]), \A_p ([v_{(3)}, \b_j])\right] \a \A_p([v_{(4)}. \b_0\cdots\hat{\b}_i\cdots\hat{\b}_j\cdots\b_l].
			\end{align*}
			
			We observe that \( X+(-1)^{k+1}Y = Z \), noting that the second term of \( Z \) simplifies to \[ \sum_{i=0}^l (-1)^{i+k+1} \A_q^{-1}(v_{(1)}\b_i) w \otimes \a \A_p ([ v_{(2)}, \b_0\cdots\hat{\b}_i\cdots \b_l]), \] since if \( x\in U(\fh) \) and \( y\in \fh \), the commutation relation \( [x_{(1)},y]x_{(2)}=xy \) holds in \( U(\fh) \).
			
		\item On one hand, \( \p \D (p, v\otimes \a_0\cdots \a_k) \) is equal to
			\begin{align*}
				&\quad \sum_{\s} \sgn(\s) \p(p, v_{(1)}\otimes \a_{\s(0)}\cdots \a_{\s(l)}) \otimes (p, v_{(2)}\otimes \a_{\s(l+1)}\cdots \a_{\s(k)})\\
				&+ \sum_{\s} \sgn(\s)(-1)^{l+1} (p, v_{(1)}\otimes \a_{\s(0)}\cdots \a_{\s(l)}) \otimes \p(p, v_{(2)}\otimes \a_{\s(l+1)}\cdots \a_{\s(k)}),
			\end{align*}
			where the sums are taken over all \( (l+1,k+1) \)-shuffle permutations. This expands to
			\begin{align*}
				&\quad\sum_\s \sgn(\s)\sum_{i=0}^l(-1)^i(p, \A_p^{-1}(\a_{\s(i)})v_{(1)}\otimes \a_{\s(0)}\cdots\hat{\a}_{\s(i)}\cdots\a_{\s(l)})\otimes (p, v_{(2)}\otimes \a_{\s(l+1)}\cdots\a_{\s(k)})\\
				&+ \sum_\s \sgn(\s)\sum_{0\leq i<j\leq l} (-1)^{i+j+1}(p, v_{(1)}\otimes [\a_{\s(i)},\a_{\s(j)}] \a_{\s(0)}\cdots\hat{\a}_{\s(i)}\cdots\hat{\a}_{\s(j)}\cdots\a_{\s(l)})\otimes (p, v_{(2)}\otimes \a_{\s(l+1)}\cdots \a_{\s(k)})\\
				&+ \sum_\s \sgn(\s) \sum_{i=l+1}^k (-1)^{i}(p, v_{(1)}\otimes \a_{\s(0)}\cdots\a_{\s(l)})\otimes(p, \A_p^{-1}(\a_{\s(i)})v_{(2)}\otimes \a_{\s(l+1)}\cdots \hat{\a}_{\s(i)}\cdots \a_{\s(k)})\\
				&+ \sum_\s \sgn(\s)\sum_{l+1\leq i<j\leq k}(-1)^{i+j+l}(p, v_{(1)}\otimes \a_{\s(0)}\cdots\a_{\s(l)})\otimes (p, v_{(2)}\otimes[\a_{\s(i)},\a_{\s(j)}]\a_{\s(l+1)}\cdots\hat{\a}_{\s(i)}\cdots \hat{\a}_{\s(j)}\cdots\a_{\s(k)}).
			\end{align*}
			On the other hand, \( \D \p (p, v\otimes \a_0\wedge\cdots \wedge \a_k) \) equals
		
			\[ \D\left(p,\sum_{i=0}^k(-1)^i (\Ad_p^{-1}(\a_i)) v \otimes \a_0\wedge\cdots \hat{\a_i}\cdots\wedge \a_k + \sum_{i<j}(-1)^{i+j+1}v\otimes [\a_i,\a_j]\wedge\a_1\wedge\cdots \hat{\a_i}\cdots\hat{\a_j}\cdots\wedge\a_k\right), \]
			which expands to
			\begin{align*}
				&\quad \sum_{\substack{\s\in\sh_i(l+1,k+1)\\ 0\leq i \leq k}} (-1)^{\s^{-1}(i)} \sgn(\s) (p, \A_p^{-1}(\a_i)v_{(1)}\otimes \a_{\s(0)}\cdots\hat{\a}_i\cdots \a_{\s(l)})\otimes (p v_{(2)}\otimes \a_{\s(l+1)}\cdots \a_{\s(k)})\\
				&+ \sum_{\substack{\s\in \sh^i(l+1,k+1)\\ 0\leq i \leq k}}(-1)^{\s^{-1}(i)}\sgn(\s)(p, v_{(1)}\otimes \a_{\s(0)}\cdots \a_{\s(l)})\otimes (p, \A_p^{-1}(\a_i) v_{(2)}\otimes \a_{\s(l+1)}\cdots\hat{\a}_i\cdots \a_{\s(k)})\\
				&+ \sum_{\substack{\s\in\sh_{i,j}(l+1,k+1)\\ 0\leq i<j\leq k}}(-1)^{\s^{-1}(i)+\s^{-1}(j)+1}\sgn(\s)(p, v_{(1)}\otimes [\a_i,\a_j]\a_{\s(0)}\cdots \hat{\a}_i\cdots\hat{\a}_j\cdots\a_{\s(l)})\otimes(p, v_{(2)}\otimes \a_{\s(l+1)}\cdots\a_{\s(k)})\\
				&+ \sum_{\substack{\s\in\sh^{i,j}(l+1,k+1)\\ 0\leq i<j\leq k}}(-1)^{\s^{-1}(i)+\s^{-1}(j)+l}\sgn(\s)(p, v_{(1)}\otimes \a_{\s(0)}\cdots \a_{\s(l)})\otimes (p, v_{(2)}\otimes [\a_i,\a_j]\a_{\s(l+1)}\cdots\hat{\a}_i\cdots\hat{\a}_j\cdots \a_{\s(k)}),
			\end{align*}
			where for \( L=i \) and \( L=i,j \), the set \( \sh_L(l+1,k+1) \) (resp. \( \sh^L(l+1,k+1) \)) consists of those shuffle permutations such that \( \{L\}\subseteq \{\s(0),\dots,\s(l)\} \) (resp.  \( \{L\}\subseteq \{\s(l+1),\dots,\s(k)\} \)). This sum is equal to \( \p \D (p, v\otimes \a_0\cdots \a_k) \) by a change of variables.
			
			\item Suppose \( \a=\a_0\cdots \a_k \). On one hand, \( \p S(p, v\otimes \a) \) is equal to \( (p^{-1}, A+ B) \), where 
			\begin{align*}
				A&= \sum_{i=0}^k (-1)^{i+1}[\A_p S(v_{(1)}), \a_i]\A_p (S(v_{(2)}))\otimes [S(v_{(3)}), \A_p^{-1}(\a_k\cdots\hat{\a}_i\cdots \a_0)]\\
				&= \sum_{i=0}^k (-1)^{i+1}\left(\A_p S(v_{(1)})\right)\a_i \otimes \left[ S(v_{(2)}), \A_p^{-1}(\a_k\cdots\hat{\a}_i\cdots \a_0)\right]
			\end{align*}
			and
			\begin{align*}
				B&= \sum_{0\leq i<j\leq k} (-1)^{i+j+k} \A_p(S(v_{(1)}))\otimes \left[ \left[S(v_{(2)}),\A_p^{-1}(\a_j)\right], \left[S(v_{(3)}), \A_p^{-1}(\a_i)\right]\right]\left[S(v_{(4)}), \A_p^{-1}(\a_k\cdots\hat{\a}_j\cdots\hat{\a}_i\cdots \a_0) \right]\\
				&= \sum_{0\leq i<j\leq k} (-1)^{i+j+k} \A_p(S(v_{(1)}))\otimes\left[S(v_{(2)}), \A_p^{-1}[\a_j,\a_i]\a_k\cdots\hat{\a}_j\cdots\hat{\a}_i\cdots \a_0 \right].
			\end{align*}
			On the other hand, \( S \p(p, v\otimes \a) \) is equal to \( (p^{-1}, X+Y) \), where 
			\begin{align*}
				X &= \sum_{i=0}^k (-1)^{i+k+1} \A_p (S((\A_p^{-1}(\a_i)v)_{(1)}))\otimes \left[ S((A_p^{-1}(\a_i)v)_{(2)}), \A_p^{-1}\left(\a_k\cdots\hat{\a}_i\cdots \a_0\right) \right]\\
				&= \sum_{i=0}^k (-1)^{i+k+1} \A_p(S(v_{(1)})) \a_i \otimes \left[S(v_{(2)}),\A_p^{-1}\left(\a_k\cdots \hat{\a}_i\cdots \a_0\right) \right]\\
				&\quad + \sum_{i=0}^k (-1)^{i+k+1} \A_p(S(v_{(1)})) \otimes \left[S(v_{(2)})\A_p^{-1}(\a_i),\A_p^{-1}\left(\a_k\cdots \hat{\a}_i\cdots \a_0\right) \right]\\
				&= \sum_{i=0}^k (-1)^{i+k+1} \A_p(S(v_{(1)})) \a_i \otimes \left[S(v_{(2)}),\A_p^{-1}\left(\a_k\cdots \hat{\a}_i\cdots \a_0\right) \right]\\
				&\quad + 2\sum_{0\leq i<j\leq k}(-1)^{i+j}\A_p(S(v_{(1)}))\otimes\left[S(v_{(2)}),\A_p^{-1}([\a_j,\a_i]\a_k\cdots\hat{\a}_j\cdots\hat{\a}_i\cdots \a_0) \right],
			\end{align*}
			and
			\begin{align*}
				Y &= \sum_{0\leq i<j\leq k}(-1)^{i+j+1}\A_p(S(v_{(1)}))\otimes\left[S(v_{(2)}),\A_p^{-1}S([\a_i,\a_j]\a_0\cdots\hat{\a}_i\cdots\hat{\a}_j\cdots \a_k) \right]\\
				&= \sum_{0\leq i<j\leq k}(-1)^{i+j+1}\A_p(S(v_{(1)}))\otimes\left[S(v_{(2)}),\A_p^{-1}([\a_j,\a_i]\a_k\cdots\hat{\a}_j\cdots\hat{\a}_i\cdots \a_0) \right],
			\end{align*}
			and the result follows.
			
	\end{enumerate}
\end{proof}

\begin{proof}[Proof of Theorem \ref{thm:hopf4}]
	 By examining the formulas, we see that equations \eqref{eq:boundary}-\eqref{eq:antipode} define continuous (resp. smooth) bundle maps if \( H \) is a topological (resp. Lie) group and \( \A \) is continuous. These bundle maps lift to bundle maps on \( H\times(\otimes(\fh)\otimes \wedge(\fh^-)) \), using the same formulas as \eqref{eq:boundary}-\eqref{eq:antipode}, with multiplication on the left of the tensor product occurring in \( \otimes(\fg) \) instead of \( U(\fg) \). It is enough to run through the previous section replacing products in \( U(\fg) \) with products in \( \otimes(\fg) \). The required equalities still hold, except in Proposition \ref{prop:bdry}, in which the only part that still holds is part \ref{third}.
\end{proof}

\addcontentsline{toc}{section}{References} 
\bibliography{Harrisonbib.bib}{}
\bibliographystyle{amsalpha}
\end{document}